\numberwithin{equation}{section}
\numberwithin{lemma}{section}
\numberwithin{remark}{section}
\newcommand{\curl}{\nabla \times}
\renewcommand{\div}{\nabla \cdot}
\newcommand{\rot}{\nabla^\perp}
\DeclareMathOperator{\Ker}{Ker}
\DeclareMathOperator{\Ran}{Ran}
\renewcommand{\d}{\mathrm{d}}
\journalname{Journal}
\begin{document}

\title{A Reduced Basis Method for Darcy flow systems that ensures local mass conservation by using exact discrete complexes
\thanks{This project has received funding from the European Union's Horizon 2020 research and innovation programme under the Marie Skłodowska-Curie grant agreement No. 101031434 -- MiDiROM.}
}

\titlerunning{A Reduced Basis Method that ensures local mass conservation}        % if too long for running head

\author{Wietse M. Boon \and Alessio Fumagalli}

\institute{W.M. Boon \at
              MOX -- Modelling and Scientific Computing, Politecnico di Milano, Milano, Italy
              \email{wietsemarijn.boon@polimi.it}
           \and
           A. Fumagalli \at
              MOX -- Modelling and Scientific Computing, Politecnico di Milano, Milano, Italy
              \email{alessio.fumagalli@polimi.it}
}

\date{Received: date / Accepted: date}

\maketitle

\begin{abstract}
    A solution technique is proposed for flows in porous media that guarantees local conservation of mass.
    We first compute a flux field to balance the mass source and then exploit exact co-chain complexes to generate a solenoidal correction. A reduced basis method based on proper orthogonal decomposition is employed to construct the correction and we show that mass balance is ensured regardless of the quality of the reduced basis approximation.
    The method is directly applicable to mixed finite and virtual element methods, among other structure-preserving discretization techniques, and we present the extension to Darcy flow in fractured porous media.
\keywords{Reduced basis method \and exact discrete complex \and mixed finite element method \and virtual element method \and fractured porous media}
% \PACS{PACS code1 \and PACS code2 \and more}
\subclass{65N22, 76M10, 55U15, 65N30}
\end{abstract}

\section{Introduction}
\label{sec: Introduction}

The construction of inexact solution schemes involves deciding which errors are acceptable and which approximations can be made for the sake of computational efficiency. Herein, we consider the mixed formulation of Darcy flow systems and take the perspective that the physical law of mass conservation is significantly more important than the constitutive relationship known as Darcy's law. In line with this perspective, our goal is to formulate an efficient solution technique that guarantees local mass conservation.

Efficient solvers are of paramount importance in applications where multiple model realizations are necessary. In the context of uncertainty quantification, inverse modeling, or system optimization, for example, it is vital to understand the dependency of the solution on model parameters. However, obtaining this relationship typically requires solving a high-fidelity model multiple times, which can be computationally expensive, if not prohibitive.
This cost can be relieved by using Reduced Order Modeling (ROM) techniques in which the original problem is replaced by a model of lower numerical complexity.

% In our setting,
% We therefore aim to exploit the dependency of the solution on material parameters, particularly the hydraulic conductivity, the mass source, and the boundary conditions.
The literature on ROM is vast and we refer the interested reader to \cite{hesthaven2016certified,quarteroni2015reduced,quarteroni2014reduced} and references therein. In this work, we focus on the Reduced Basis Methods (RBM) constructed using Proper Orthogonal Decomposition (POD) \cite[Sec. 6.3]{quarteroni2015reduced}. In particular, we use the well-established snapshot method, which originated from turbulent flow models \cite{sirovich1987turbulence}.

A direct application of RBM to the mixed formulation of Darcy flow would introduce an error in both the mass balance and constitutive equations. Hence, mass conservation cannot be guaranteed by the reduced basis solution. An alternative approach is to construct separate reduced bases for the flux and pressure variables. However, special considerations are then necessary to ensure inf-sup stability. Finally, if we were to correct an RBM solution through the use of a projection, then the projection operator can be as computationally expensive as the original problem. A more sophisticated approach is therefore required.

In this work, we propose a three-step solution procedure.
In the first step, an initial flux field is obtained by using a locally conservative method, such as the Finite Volume Method with a Two-Point Flux Approximation (TPFA). Although the TPFA scheme is computationally efficient, it generally lacks consistency and therefore requires a suitable correction, which is constructed in the second step. Since the mass balance is already satisfied at this stage, the correction needs to be divergence-free. The Helmholtz decomposition then ensures us that this correction can be described as the curl of a potential field $r$. The second step therefore employs an $H(\curl)$-conforming discretization to compute the potential and then updates the flux field with $\curl r$. Finally, the pressure field is constructed in the third step.

We restrict ourselves to discretization methods capable of conserving mass locally by which we mean that: (1) the mass is balanced in each element and (2) the normal flux is uniquely defined on each face of the mesh. Second, since our approach relies on fundamental properties such as the Helmholtz decomposition, we focus on structure-preserving methods, i.e. methods based on discrete spaces that form an exact discrete co-chain complex. Two discretization methods with these properties are used as leading examples, namely mixed finite element methods \cite{arnold2006finite,arnold2018finite} and mixed virtual element methods \cite{beirao2013basic}.

We then introduce the Reduced Basis Method in the second step to rapidly produce the potential field $r$ for a given conductivity distribution. Since the correction $\nabla \times r$ is guaranteed to be solenoidal, we ensure that it does not impact the mass conservation equation.

The procedure is presented in the general context of exact complexes. This allows us to directly extend the method to similar systems of equations, including Darcy flow systems in fractured porous media. By rewriting the equations in terms of the mixed-dimensional divergence, the problem can be identified as a mixed-dimensional Darcy flow system \cite{boon2018robust}. In turn, the solution procedure directly applies. In this case, we employ the mixed-dimensional curl to ensure that the correction step does not impact the mass conservation equation in the bulk, fractures, and fracture intersections. The mixed-dimensional curl, defined in \cite{boon2021functional}, has been used before in the analysis of mixed finite elements for elasticity \cite{boon2021stable} and in the construction of auxiliary space preconditioners \cite{budisa2020mixed}.

There are several similarities with the framework of auxiliary space preconditioning \cite{hiptmair2007nodal}. In particular, we use an exact complex to decompose the solution into an irrotational and a solenoidal part. However, we do not form a decomposition of higher regularity than $H(\curl)$ and thus directly work with edge-based instead of nodal elements. Moreover, our focus is on ROM rather than preconditioning.

We note that it is common to use the curl for generating solenoidal fields in the construction of stable finite element pairs for Stokes flow \cite{falk2013stokes,neilan2015discrete}. An important difference with our work is that we completely transfer the problem to the Sobolev space $H(\curl)$ instead of enriching the finite element spaces with additional basis functions.

In short, the main contributions of this work are:
\begin{itemize}
    \item A novel procedure is proposed that solves the mixed formulation of Darcy flow systems in three steps. In our example case of lowest-order, this procedure combines the efficiency of TPFA with the consistency of mixed finite element methods.
    \item We augment the procedure to obtain a Reduced Basis Method. The quality of the reduced order approximation does not affect the local conservation of mass.
    \item By presenting the method in an abstract setting, the extension to Darcy flow in fractured porous media follows immediately.
    \item The validity of the approach is confirmed by numerical experiments for cases in two and three dimensions, with immersed fracture networks.
\end{itemize}

The article is organized as follows. First, the model problem and our notation conventions are introduced in Sections~\ref{sub: model problem} and \ref{sub: preliminaries and notation}, respectively. Afterward, Section~\ref{sec: solution technique} presents the three-step procedure as it applies to Darcy flow in 3D and its generalization to the abstract setting of exact complexes. This section moreover shows the applicability to structure-preserving discretization methods and Darcy flow in fractured porous media. Section~\ref{sec: RBM} concerns the reduced basis method and its construction by proper orthogonal decomposition. The numerical implementation is discussed in Section~\ref{sec: Numerics} and we present experiments showing the performance of the method. Finally, Section~\ref{sec: Conclusions} contains the concluding remarks.

\subsection{The model problem}
\label{sub: model problem}

Let $\Omega \subset \mathbb{R}^n$ with $n \in \{2, 3\}$ be a contractible, bounded Lipschitz domain. Let the hydraulic conductivity $K$ be a symmetric, positive definite tensor field on $\Omega$ and let $f$ be the mass source. We then consider the Darcy flow problem: Find the pair $(q, p)$ such that
\begin{subequations} \label{eqs: Darcy}
\begin{align}
    q + K \nabla p &= 0, \label{eq: Darcy law} \\
    \div q &= f, \label{eq: mass balance}
    &\text{on }&\Omega,
\end{align}
subject to the boundary conditions
\begin{align}
    \nu \cdot q &= 0, &\text{on }&\partial_q \Omega, \\
    p &= g, &\text{on }&\partial_p \Omega,
\end{align}
\end{subequations}
with $\partial \Omega = \partial_q \Omega \cup \partial_p \Omega$ disjointly and $\nu$ the unit vector that is normal to $\partial \Omega$. We refer to problem \eqref{eqs: Darcy} as the \emph{Neumann} problem if $\partial \Omega = \partial_q \Omega$, the \emph{Dirichlet} problem if $\partial \Omega = \partial_p \Omega$, and \emph{mixed} otherwise.

\subsection{Preliminaries and notation}
\label{sub: preliminaries and notation}

The following notation is used throughout this work. First, let $L^2$ be the space of square integrable functions on $\Omega$ and let $\langle \cdot, \cdot \rangle$ denote the corresponding inner product. We reuse this notation for the inner product between vector-valued, square integrable functions. On the other hand, the notation with round brackets $(\cdot, \cdot)$ is reserved for tuples.

Let $(\nabla)$, $(\curl)$, and $(\div)$ denote the gradient, curl, and divergence operators, respectively. These differential operators induce the following Sobolev spaces:
\begin{align*}
    H(\div) &:= \{q \in (L^2)^n : \div q \in L^2 \}, & n &\in \{2,3\}, \\
    H(\curl) &:= \{r \in (L^2)^n : \curl r \in (L^2)^n \}, & n &= 3, \\
    H(\nabla) &:= \{s \in L^2 : \nabla s \in (L^2)^n \},  & n &\in \{2,3\}.
\end{align*}
We remark that $H(\nabla)$ is typically denoted by $H^1(\Omega)$ but we retain this notation for consistency.
Additionally, for $n = 2$, we define the rotated gradient $\rot := [-\partial_y, \partial_x]^T$ and $H(\rot) := H(\nabla)$.

Let the subspaces containing homogeneous boundary conditions on $\partial_q \Omega$ be denoted by
\begin{align*}
    H_{\partial_q \Omega}(\div) &:= \{q \in H(\div) : \nu \cdot q|_{\partial_q \Omega} = 0 \}, \\
    H_{\partial_q \Omega}(\curl) &:= \{r \in H(\curl) : \nu \times r|_{\partial_q \Omega} = 0 \}, \\
    H_{\partial_q \Omega}(\nabla) &:= \{s \in H(\nabla) : s|_{\partial_q \Omega} = 0 \}.
\end{align*}
We use the short-hand notation $H_0(\cdot)$ for $H_{\partial \Omega}(\cdot)$.
For an operator $\d$, we let $\Ran(\d)$ and $\Ker(\d)$ denote its range and kernel, respectively.

Finally, we use the Gothic font to denote mixed-dimensional entities, e.g. $(\mathfrak{q, p})$ introduced in Section~\ref{sub: MiDi}. The Sans Serif font is used to denote matrices and vectors, i.e. $\mathsf{Ax = b}$.

\section{A solution technique based on exact complexes}
\label{sec: solution technique}

We present a solution technique in which we first solve the mass balance equation \eqref{eq: mass balance}. We then exploit the exact de Rham complex to construct a solenoidal correction such that \eqref{eq: Darcy law} is satisfied as well. For ease of exposition, we first consider the three-dimensional Neumann problem in Section~\ref{sub: Neumann 3D} and present the general setting in Section~\ref{sub: general}. The discrete case is discussed in Section~\ref{sub: Discretization} and we show that the procedure can be applied to flows in fractured porous media in Section~\ref{sub: MiDi}.

\subsection{The Neumann problem in 3D}
\label{sub: Neumann 3D}

We start by considering the Neumann problem, characterized by problem~\eqref{eqs: Darcy} with $n = 3$ and $\partial_q \Omega = \partial \Omega$. In this case, the co-chain complex of interest is known as the de Rham complex with boundary conditions, given by
\begin{equation} \label{eq: de Rham 3D}
\begin{tikzcd}
H_0(\nabla) \arrow[r, "\nabla"] &
H_0(\curl)  \arrow[r, "\curl"] &
H_0(\div)   \arrow[r, "\div"] &
L^2 / \mathbb{R}.
\end{tikzcd}
\end{equation}

We note two important properties of this complex. First, we have the elementary identities $(\nabla) \circ (\curl) = 0$ and $(\curl) \circ (\div) = 0$. Second, the Helmholtz decomposition ensures that if $q \in H_0(\div)$ with $\div q = 0$, then a $r \in H_0(\curl)$ exists such that $q = \curl r$. Moreover, if $r \in H_0(\curl)$ and $\curl r = 0$, then $r = \nabla s$ for some $s \in H_0(\nabla)$.

Our solution technique exploits these properties of the complex. Let us proceed according to the following three steps.
\begin{enumerate}
    \item Given $f \in L^2 / \mathbb{R}$, let $q_f \in H_0(\div)$ be any function that satisfies
    \begin{align} \label{eq: Darcy step 1}
        \div q_f &= f.
    \end{align}

    \item Let $q_0 := q - q_f$. Since $\div q_0 = 0$, the Helmholtz decomposition ensures that a $r \in H_0(\curl)$ exists such that $\curl r = q_0$. This variable $r$ then has the property
    \begin{align*}
        K^{-1} \curl r = K^{-1} (q - q_f) =  -\nabla p - K^{-1} q_f.
    \end{align*}
    Note that $r$ cannot be found directly in this way since the equation is posed in $H_0(\div)$ for an unknown in $H_0(\curl)$. With the aim of obtaining a well-posed problem, we test this equation with functions $\curl \tilde r \in H_0(\div)$. We derive:
    \begin{align*}
        \langle K^{-1} \curl r, \curl \tilde r \rangle
        &= -\langle \nabla p, \curl \tilde r \rangle
        - \langle K^{-1} q_f, \curl \tilde r \rangle \\
        &= -\langle p, \div \curl \tilde r \rangle
        - \langle K^{-1} q_f, \curl \tilde r \rangle \\
        &= - \langle K^{-1} q_f, \curl \tilde r \rangle.
    \end{align*}
    Here, the second equality is due to integration by parts and the third follows from $\div \curl \tilde r = 0$.
    However, this equation still does not guarantee a unique solution $r$ because the curl operator has a non-zero kernel, which is given by the range of the gradient. We ensure orthogonality to this kernel by imposing $0 = \langle r, \nabla s \rangle = -\langle \div r, s \rangle$ for all $s \in H(\nabla)$. Thus, we introduce a term that penalizes $\div r$, giving us the problem:
    Find $r \in H_0(\curl) \cap H(\div)$ such that
    \begin{align} \label{eq: Darcy step 2}
        \langle K^{-1} \curl r, \curl \tilde r \rangle
        + \langle \div r, \div \tilde r \rangle
        &= - \langle K^{-1} q_f, \curl \tilde r \rangle,
    \end{align}
    for all $\tilde r \in H_0(\curl) \cap H(\div)$.

    \item We set $q := q_f + \curl r$ and it remains to compute the pressure variable: Find $p \in L^2 / \mathbb{R}$ such that
    \begin{align} \label{eq: Darcy step 3}
        \langle p, \div \tilde q \rangle &= \langle K^{-1} q, \tilde q \rangle,
        & \forall \tilde q \in H_0(\div).
    \end{align}
\end{enumerate}

The solvability of the systems \eqref{eq: Darcy step 1}--\eqref{eq: Darcy step 3} is discussed in the more general setting of the next subsection.

\begin{remark}
    Inhomogeneous boundary conditions can readily be incorporated in this procedure. In particular, the natural boundary condition $p = g_p$ on $\partial_p \Omega$ amounts to subtracting the term $\langle g_p, \nu \cdot (\curl \tilde r) \rangle_{\partial_p \Omega}$ from the right-hand side of \eqref{eq: Darcy step 2} and adding $\langle g_p, \nu \cdot \tilde q \rangle_{\partial_p \Omega}$ to the right-hand side of \eqref{eq: Darcy step 3}.

    On the other hand, the essential boundary condition $\nu \cdot q = g_q$ on $\partial_q \Omega$ requires first choosing a function $q_g \in H(\div)$ with $\nu \cdot q_g = g_q$. Then, we subtract $\div q_g$ from the right-hand side of \eqref{eq: Darcy step 1}, subtract $\langle K^{-1} q_g, \curl \tilde r \rangle$ from \eqref{eq: Darcy step 2}, and change the computation in Step 3 to $q := q_f + \curl r + q_g$.
    To avoid unnecessary distraction, we limit our exposition herein to the case of homogeneous boundary conditions.
\end{remark}

\subsection{The general case}
\label{sub: general}

In order to generalize the three-step procedure, we borrow notation from the setting of exterior calculus. In particular, each function space used in the previous section can be represented by $H \Lambda^k \subset L^2 \Lambda^k$ with $k \in [n - 3, n]$ and connected by differentials $\d_k : H\Lambda^k \to H\Lambda^{k + 1}$. We omit the subscript on $\d$ when no ambiguity arises and consider the co-chain complex $(H\Lambda^\bullet, \d)$, given by
\begin{equation} \label{eq: de Rham forms}
\begin{tikzcd}
H\Lambda^{n - 3} \arrow[r, "\d"] &
H\Lambda^{n - 2} \arrow[r, "\d"] &
H\Lambda^{n - 1} \arrow[r, "\d"] &
H\Lambda^n.
\end{tikzcd}
\end{equation}

Let us recall the defining property of an \emph{exact} complex, namely that
\begin{align}
    \Ker(\d_k) = \Ran(\d_{k - 1}).
\end{align}
This has two important implications. First, $\Ker(\d_k) \supseteq \Ran(\d_{k - 1})$ means that $\d_k \d_{k-1} = 0$. On the other hand, $\Ker(\d_k) \subseteq \Ran(\d_{k - 1})$ implies that if $p \in H\Lambda^k$ satisfies $\d p = 0$, then a $q \in H\Lambda^{k -1}$ exists with $\d_{k - 1} q = p$ and $q \perp \Ker(\d_{k - 1})$.
Moreover, we define both $\d_{n - 4}$ and $\d_n$ to be zero. In turn, we have $H \Lambda^{n - 3} \perp \Ker(\d)$ and $H \Lambda^n = \Ran(\d)$.

Let $\d^*$ be the adjoint of $\d$, i.e. $\langle \d^*p, q \rangle := \langle p, \d q \rangle$ for all $q \in H\Lambda^{k - 1}$ and sufficiently regular $p \in L^2 \Lambda^k$. To make precise the required regularity, we define the Sobolev spaces
\begin{align*}
    H \Lambda^k &:= \{ p \in L^2 \Lambda^k : \d p \in L^2 \Lambda^{k + 1} \}, \\
    H^* \Lambda^k &:= \{ p \in L^2 \Lambda^k : \d^*p \in L^2 \Lambda^{k - 1} \}.
\end{align*}

\begin{remark}
    Formally, the Sobolev space $H \Lambda^k$ defined here is a \emph{representation} of the vector space containing alternating, multi-linear $k$-forms on $\Omega$ and $\d$ is a representation of the exterior derivative. Here, we do not make this distinction and will work directly with the canonical representations of both the forms and the differentials. We refer the interested reader to \cite{arnold2018finite,spivak2018calculus}.
\end{remark}

The Darcy flow system \eqref{eqs: Darcy} can now be identified as a problem of the form: Find $(q,p) \in H\Lambda^{n - 1} \times H^*\Lambda^n$ that satisfies
\begin{subequations} \label{eqs: Darcy abstract}
\begin{align}
    K^{-1} q - \d^*p &= 0, \label{eq: Darcy law abstract} \\
    \d q &= f. \label{eq: Mass law abstract}
\end{align}
\end{subequations}
This formulation covers the three types of boundary conditions in 2D and 3D presented in Section~\ref{sub: model problem}. The corresponding spaces are presented in Table~\ref{tab: table 1} and their precise definitions can be found in Section~\ref{sub: preliminaries and notation}.

\begin{table}[ht]
    \caption{Explicit definitions of the spaces $H \Lambda^k$ for the different boundary conditions.}
    \label{tab: table 1}
    \centering

    \begin{tabular}{rllclclcl}
    \hline
    & & $H\Lambda^{n - 3}$ & & $H\Lambda^{n - 2}$ & & $H\Lambda^{n - 1}$ & & $H\Lambda^n$ \\
    \hline
    $n = 3$ & $\d$ & & $\nabla$ &  & $\curl$ &  & $\div$ & \\
    & Dirichlet & $H(\nabla) / \mathbb{R}$ & & $H(\curl)$ & & $H(\div)$ & & $L^2$ \\
    & Mixed & $H_{\partial_q \Omega}(\nabla)$ & & $H_{\partial_q \Omega}(\curl)$ & & $H_{\partial_q \Omega}(\div)$ & & $L^2$ \\
    & Neumann & $H_0(\nabla)$ & & $H_0(\curl)$ & & $H_0(\div)$ & & $L^2 / \mathbb{R}$ \\
    \hline
    $n = 2$ & $\d$ & & $\subset$ & &  $\rot$ & &  $\div$ & \\
    & Dirichlet & $0$ & & $H(\rot) / \mathbb{R}$ & & $H(\div)$ & & $L^2$ \\
    & Mixed & $0$ & & $H_{\partial_q \Omega}(\rot)$ & & $H_{\partial_q \Omega}(\div)$ & & $L^2$ \\
    & Neumann & $0$ & & $H_0(\rot)$ & & $H_0(\div)$ & & $L^2 / \mathbb{R}$ \\
    \hline
    \end{tabular}
\end{table}

The three steps of the solution procedure from Section~\ref{sub: Neumann 3D} can now be recast in terms of the spaces $H \Lambda^k$ and their associated differentials $\d$. For each step, we briefly show the solvability of the involved problem using standard arguments.
\begin{enumerate}
    \item \label{step: 1}
    Find $q_f \in H \Lambda^{n - 1}$ that satisfies
    \begin{align} \label{eq: general step 1}
        \d q_f &= f.
    \end{align}

    \begin{lemma}
        Problem \eqref{eq: general step 1} admits a solution.
    \end{lemma}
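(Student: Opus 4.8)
\emph{Proof proposal.} The plan is to obtain the claim as an immediate consequence of the exactness of the complex \eqref{eq: de Rham forms} at its last slot. By convention we have set $\d_n = 0$, so that $\Ker(\d_n) = H\Lambda^n$; combined with the exactness identity $\Ker(\d_n) = \Ran(\d_{n-1})$, this yields $\Ran(\d_{n-1}) = H\Lambda^n$. Hence the differential $\d = \d_{n-1}$ maps $H\Lambda^{n-1}$ \emph{onto} $H\Lambda^n$, and since the datum satisfies $f \in H\Lambda^n$ by hypothesis, it belongs to $\Ran(\d)$. Therefore at least one $q_f \in H\Lambda^{n-1}$ with $\d q_f = f$ exists, which is precisely the assertion of the lemma.

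It is worth noting that this is exactly the point at which the implicit compatibility condition on the source is used. For the Neumann problem one has $H\Lambda^n = L^2/\mathbb{R}$, so the mere membership $f \in H\Lambda^n$ already encodes the zero-mean constraint $\langle f, 1\rangle = 0$; for the Dirichlet and mixed problems $H\Lambda^n = L^2$ and no condition is needed. One may further record, although it is not required for the statement, that the solution is unique only up to $\Ker(\d_{n-1}) = \Ran(\d_{n-2})$, that the representative orthogonal to this kernel is the minimum-norm solution, and that a Poincar\'e-type inequality on the exact Hilbert complex gives the bound $\|q_f\| \le C \|f\|$ for that choice; in the discrete setting of Section~\ref{sub: Discretization} this is what will make the step numerically well posed.

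There is essentially no obstacle internal to this proof: once the complex is assumed exact, the surjectivity of its final differential is a matter of unwinding definitions. The genuine input is the exactness hypothesis itself, which for the function spaces in Table~\ref{tab: table 1} amounts to classical facts about the de Rham cohomology of a contractible bounded Lipschitz domain --- equivalently, to the Helmholtz decomposition invoked in Section~\ref{sub: Neumann 3D} and, in the concrete case $k = n$, to the well-known surjectivity of $\div$ from $H_0(\div)$ onto $L^2/\mathbb{R}$ (respectively from $H(\div)$ onto $L^2$). These results are quoted rather than reproved here, so the only care needed is to confirm that the abstract framework is genuinely instantiated by the spaces of Table~\ref{tab: table 1}.
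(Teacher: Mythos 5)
Your argument is correct and is exactly the paper's proof: the authors likewise invoke $H\Lambda^n = \Ran(\d)$ (which follows from exactness together with the convention $\d_n = 0$, as you spell out) to conclude $f \in \Ran(\d)$, and they also note the non-uniqueness of $q_f$. Your additional remarks on the compatibility condition and the minimum-norm representative are consistent elaborations but not part of the paper's argument.
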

    \begin{proof}
        Existence is guaranteed by the fact that $f \in H \Lambda^n = \Ran(\d)$. We emphasize that $q_f$ is generally not unique.
        \qed
    \end{proof}
    \item \label{step: 2}
    Solve for $r \in H \Lambda^{n-2} \cap H^* \Lambda^{n-2}$:
    \begin{align} \label{eq: general step 2}
        (\d^*K^{-1}\d + \d\d^*)r = - \d^* K^{-1} q_f.
    \end{align}

    \begin{lemma}
        Problem \eqref{eq: general step 2} admits a unique solution
    \end{lemma}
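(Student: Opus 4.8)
The plan is to solve the weak formulation underlying \eqref{eq: general step 2} by means of the Lax--Milgram lemma. Here \eqref{eq: general step 2} is to be read weakly, just as \eqref{eq: Darcy step 2} renders it for $n = 3$: on the Hilbert space $V := H\Lambda^{n-2} \cap H^*\Lambda^{n-2}$, equipped with the graph norm $\|r\|_V^2 := \|r\|^2 + \|\d r\|^2 + \|\d^* r\|^2$, we seek $r \in V$ such that
\begin{align*}
    a(r, \tilde r) := \langle K^{-1} \d r, \d \tilde r \rangle + \langle \d^* r, \d^* \tilde r \rangle = - \langle K^{-1} q_f, \d \tilde r \rangle =: F(\tilde r), \qquad \forall \tilde r \in V .
\end{align*}
It then suffices to verify that $a$ is bounded and coercive on $V$ and that $F \in V'$.

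Boundedness I would dispatch immediately: since $K$, and hence $K^{-1}$, is symmetric, uniformly bounded, and uniformly positive definite, one has $|a(r,\tilde r)| \le C\|r\|_V\|\tilde r\|_V$; and since $q_f \in L^2\Lambda^{n-1}$ by Step~\ref{step: 1}, also $|F(\tilde r)| \le \|K^{-1}q_f\|\,\|\d\tilde r\| \le C\|\tilde r\|_V$.

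The substantial step, which I expect to be the main obstacle, is coercivity. Uniform positivity of $K^{-1}$ reduces it to the Poincar\'e--Friedrichs inequality $\|r\|^2 \le C(\|\d r\|^2 + \|\d^* r\|^2)$ for all $r \in V$, where $\d^*$ is the adjoint of $\d_{n-3}$. To obtain it I would appeal to the Hodge decomposition of \eqref{eq: de Rham forms}: exactness makes the ranges of all differentials closed and renders the harmonic space $\mathcal{H} := \Ker(\d_{n-2}) \cap \Ker(\d^*)$ trivial, since $\Ker(\d_{n-2}) = \Ran(\d_{n-3})$ is the orthogonal complement of $\Ker(\d^*)$. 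Consequently every $r \in V$ splits $L^2$-orthogonally as $r = b + z$ with $b \in \Ran(\d_{n-3})$ (hence closed, $\d b = 0$) and $z$ coclosed ($\d^* z = 0$) and orthogonal to $\Ker(\d_{n-2})$; one checks that then $b, z \in V$, $\d r = \d z$, and $\d^* r = \d^* b$. Because $\d_{n-2}$ and $\d_{n-3}$ have closed range, the associated Poincar\'e estimates bound $\|z\| \le C\|\d z\|$ and $\|b\| \le C\|\d^* b\|$, and summing these yields the desired inequality.

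With boundedness, coercivity, and $F \in V'$ in hand, the Lax--Milgram lemma provides a unique $r \in V$ solving the weak problem, which is the assertion. The one genuinely delicate ingredient is the Poincar\'e--Friedrichs inequality, and hence ultimately the closed-range and vanishing-harmonic-space properties of \eqref{eq: de Rham forms}; these are classical in the Hilbert-complex and finite-element-exterior-calculus setting, cf.\ \cite{arnold2018finite}.
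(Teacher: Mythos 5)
Your proof is correct, but it takes a genuinely different route from the paper's. The paper argues existence constructively: it takes the true solution $(q,p)$ of \eqref{eqs: Darcy abstract}, sets $q_0 := q - q_f$, uses exactness to produce $r$ with $\d r = q_0$ and $r \perp \Ker(\d)$ (hence $\d^* r = 0$), and checks by substitution that this $r$ satisfies \eqref{eq: general step 2}; uniqueness then follows from the energy identity $\langle K^{-1}\d r, \d r\rangle + \langle \d^* r, \d^* r\rangle = 0$ combined with $\Ker(\d_{n-3}^*) = \Ran(\d_{n-3})^{\perp} = \Ker(\d_{n-2})^{\perp}$. Your Lax--Milgram argument instead establishes coercivity on the graph space via the Poincar\'e--Friedrichs inequality, for which you correctly identify the needed ingredients (closed ranges and trivial harmonic space, both consequences of exactness) and give a valid decomposition argument. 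What each approach buys: yours is self-contained -- it does not presuppose solvability of the original mixed problem -- and it delivers a quantitative stability bound for $r$ in terms of $q_f$, which the paper's qualitative argument does not provide. The paper's route is lighter, needing only the algebraic statement $\Ker(\d_k) = \Ran(\d_{k-1})$ and no Poincar\'e constant, and its existence step has the side benefit of identifying the solution as satisfying $\d r = q - q_f$, which is what the subsequent lemmas implicitly rely on to show that step 3 is solvable and that the three-step procedure reproduces the true flux. Under your proof that identification is still recovered a posteriori: the paper's candidate is a solution, and your uniqueness forces it to coincide with the Lax--Milgram solution.
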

    \begin{proof}
        Let $q_0 := q - q_f$, for which we have $\d q_0 = 0$. The exactness of the complex ensures that a $r \in H \Lambda^{n - 2}$ exists with $\d r = q_0$ and $r \perp \Ker(\d)$. This implies that $\d^* r = 0$ and so $r \in H^*\Lambda^{n - 2}$. Inserting $r$ in \eqref{eq: general step 2}, we see that
        \begin{align*}
            (\d^*K^{-1}\d + \d\d^*)r
            = \d^*K^{-1}\d r
            &= \d^*K^{-1}(q - q_f) \\
            &= \d^*\d^* p - \d^* K^{-1} q_f
            = - \d^* K^{-1} q_f.
        \end{align*}
        In turn, existence is verified and it remains to show uniqueness. Considering a zero right-hand side, we test the equation with $r \in H \Lambda^{n - 2} \cap H^* \Lambda^{n-2}$ and derive
        \begin{align*}
            \langle K^{-1} \d r, \d r \rangle
            + \langle \d^* r, \d^* r \rangle
            &= 0.
        \end{align*}
        Since $K^{-1}$ is positive definite, it follows that $r \in \Ker(\d_{n - 2}) \cap \Ker(\d_{n - 3}^*)$. Now, since
        \begin{align*}
            \Ker(\d_{n - 3}^*) = \Ran(\d_{n - 3})^{\perp} = \Ker(\d_{n - 2})^{\perp},
        \end{align*}
        we have $r = 0$ and uniqueness is shown.
    \qed
    \end{proof}
    \item \label{step: 3}
    Construct $q := q_f + \d r$. Solve for $p \in H^* \Lambda^n$:
    \begin{align} \label{eq: general step 3}
        \d^* p
        &= K^{-1} q.
    \end{align}
    \begin{lemma}
        Problem \eqref{eq: general step 3} admits a unique solution.
    \end{lemma}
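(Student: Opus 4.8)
The plan is to prove uniqueness and existence separately, in both cases leaning on the exactness of the complex and on the conventions $\d_n = 0$ and $H\Lambda^n = \Ran(\d)$ introduced above. For uniqueness I would take the difference $p$ of two solutions, so that $\d^* p = 0$, and test against $\d \tilde q$ for arbitrary $\tilde q \in H\Lambda^{n-1}$; the definition of $\d^*$ gives $\langle p, \d \tilde q\rangle = 0$, hence $p \perp \Ran(\d_{n-1})$. Since $\d_n = 0$ we have $\Ran(\d_{n-1}) = \Ker(\d_n) = H\Lambda^n = L^2\Lambda^n$, while $p \in H^*\Lambda^n \subseteq L^2\Lambda^n$, so $p = 0$. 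In the Neumann case the quotient by $\mathbb{R}$ in $H\Lambda^n$ is precisely what removes the constants from $\Ker(\d^*_n)$, and the same conclusion holds.

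For existence I would first note that the right-hand side is admissible: $q = q_f + \d r \in H\Lambda^{n-1}$ and $K^{-1}$ is a bounded, symmetric, positive definite tensor field, so $K^{-1} q \in L^2\Lambda^{n-1}$. Because $\d_{n-1}$ is closed and densely defined with closed range $\Ran(\d_{n-1}) = \Ker(\d_n) = H\Lambda^n$, the closed range theorem yields $\Ran(\d^*_n) = \Ker(\d_{n-1})^\perp$ in $L^2\Lambda^{n-1}$. It therefore suffices to show $K^{-1}q \perp \Ker(\d_{n-1})$, and by exactness $\Ker(\d_{n-1}) = \Ran(\d_{n-2})$, so the claim reduces to $\langle K^{-1}q, \d \tilde r\rangle = 0$ for every $\tilde r \in H\Lambda^{n-2}$.

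To prove this I would reduce the class of test functions by a Helmholtz-type argument: decompose $\tilde r = \tilde r_K + \tilde r_\perp$ orthogonally in $L^2\Lambda^{n-2}$ along the closed subspace $\Ker(\d_{n-2}) = \Ran(\d_{n-3})$. Then $\d \tilde r = \d \tilde r_\perp$, the component $\tilde r_\perp$ still lies in $H\Lambda^{n-2}$ because $\d \tilde r_\perp = \d \tilde r \in L^2\Lambda^{n-1}$, and $\tilde r_\perp \perp \Ran(\d_{n-3})$ forces $\d^* \tilde r_\perp = 0$, so in fact $\tilde r_\perp \in H\Lambda^{n-2} \cap H^*\Lambda^{n-2}$. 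Testing the weak form of \eqref{eq: general step 2}, i.e. $\langle K^{-1}\d r, \d \tilde r_\perp\rangle + \langle \d^* r, \d^* \tilde r_\perp\rangle = -\langle K^{-1}q_f, \d \tilde r_\perp\rangle$ as already derived in \eqref{eq: Darcy step 2}, and discarding the middle term since $\d^*\tilde r_\perp = 0$, gives $\langle K^{-1}(q_f + \d r), \d \tilde r_\perp\rangle = 0$, that is $\langle K^{-1}q, \d \tilde r\rangle = 0$. This establishes the required orthogonality, so $K^{-1}q \in \Ran(\d^*_n)$ and a solution $p \in H^*\Lambda^n$ exists.

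I expect the only genuinely delicate point to be this reduction of test functions, namely the Hodge/Helmholtz decomposition showing that $\d \tilde r$ for $\tilde r \in H\Lambda^{n-2}$ is already realised by some $\tilde r_\perp \in H\Lambda^{n-2}\cap H^*\Lambda^{n-2}$; it relies on the closedness of $\Ran(\d_{n-3})$ from exactness together with the short regularity bootstrap $\tilde r_\perp \in H^*\Lambda^{n-2}$, plus the appeal to the closed range theorem. The remaining steps are routine bookkeeping with the conventions $\d_{n-4} = \d_n = 0$ and the definitions of $\d$ and $\d^*$.
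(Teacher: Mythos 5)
Your uniqueness argument coincides with the paper's: $\d^*p=0$ means $p\perp\Ran(\d_{n-1})=H\Lambda^n$, hence $p=0$ (with the quotient by $\mathbb{R}$ handling the Neumann case). For existence, however, you take a genuinely different route. The paper simply observes that the true pressure of the original problem \eqref{eqs: Darcy abstract} solves \eqref{eq: general step 3}; this is a one-line argument, but it implicitly leans on the well-posedness of the original mixed problem and on the identification, established in the step-2 lemma, of the constructed flux $q=q_f+\d r$ with the true flux. You instead verify the solvability condition directly: by the closed range theorem, $\Ran(\d_n^*)=\Ker(\d_{n-1})^\perp$, and by exactness $\Ker(\d_{n-1})=\Ran(\d_{n-2})$, so it suffices to check $\langle K^{-1}q,\d\tilde r\rangle=0$ for all $\tilde r\in H\Lambda^{n-2}$; you obtain this from the variational equation of step 2 after projecting the test function onto the orthogonal complement of $\Ker(\d_{n-2})=\Ran(\d_{n-3})$ so that the penalty term $\langle\d^*r,\d^*\tilde r_\perp\rangle$ drops out. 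This reduction is carried out correctly (closedness of the kernel gives the $L^2$-orthogonal splitting, and $\tilde r_\perp\perp\Ran(\d_{n-3})$ indeed forces $\d^*\tilde r_\perp=0$), and the argument is sound. What your version buys is self-containedness: existence of $p$ is derived purely from the output of step 2 and the structure of the complex, without invoking the solution of the original saddle-point problem; what it costs is the extra Hodge-type bookkeeping and the appeal to the closed range theorem, which the paper's shortcut avoids entirely.
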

    \begin{proof}
        In this case, existence is verified by the true solution $p$ to the original problem \eqref{eqs: Darcy abstract}. For uniqueness, we note that a zero right-hand side is equivalent to stating that $p \perp \Ran(\d)$. However, since $H \Lambda^n = \Ran(\d)$, we conclude that $p = 0$.
    \qed
    \end{proof}
\end{enumerate}

To conclude this section, we briefly show that our three-step procedure constructs the unique solution to the original problem.

\begin{lemma}
    The pair $(q, p)$ obtained from the three-step procedure solves \eqref{eqs: Darcy abstract}.
\end{lemma}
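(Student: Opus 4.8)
The plan is to verify the two equations of \eqref{eqs: Darcy abstract} directly, by unwinding the construction of the pair $(q,p)$. Mass balance \eqref{eq: Mass law abstract} is the easy half: since $q := q_f + \d r$ and exactness gives $\d \circ \d = 0$, we have $\d q = \d q_f + \d\d r = \d q_f$, and this equals $f$ by \eqref{eq: general step 1}. Observe that this uses nothing about $r$ beyond membership in $H\Lambda^{n-2}$ — precisely the point exploited throughout the paper, namely that the correction $\d r$ never disturbs the conservation equation.

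For Darcy's law \eqref{eq: Darcy law abstract} there is formally nothing to prove: the pressure $p$ is \emph{defined} in Step 3 as the solution of \eqref{eq: general step 3}, i.e. $\d^* p = K^{-1} q$, which is literally $K^{-1} q - \d^* p = 0$. So the only genuine content lies in the fact that Step 3 is well-posed for the flux $q = q_f + \d r$ delivered by Steps 1--2, i.e. that $K^{-1} q \in \Ran(\d^*)$. I would make this explicit as follows. By the argument in the proof of the Step 2 lemma, the unique solution $r$ of \eqref{eq: general step 2} satisfies $\d^* r = 0$; rearranging \eqref{eq: general step 2} then gives $\d^* K^{-1} q = \d^* K^{-1} q_f + \d^* K^{-1} \d r = -\,\d\d^* r = 0$. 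Hence $K^{-1} q$ is $L^2$-orthogonal to $\Ran(\d_{n-2}) = \Ker(\d_{n-1})$ and therefore lies in $\Ran(\d^*)$ (using that $\d_{n-1}$ has closed range, since $H\Lambda^n = \Ran(\d)$), which is exactly the solvability condition behind the Step 3 lemma.

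An alternative, more in the spirit of the earlier lemmas, is to invoke well-posedness of the model problem \eqref{eqs: Darcy abstract} directly: let $(q^*, p^*)$ be its solution. Then $\d(q^* - q_f) = f - f = 0$, so by exactness the Helmholtz potential $r^*$ of $q^* - q_f$ (with $r^* \perp \Ker(\d)$, hence $\d^* r^* = 0$) exists; the computation in the Step 2 lemma shows $r^*$ solves \eqref{eq: general step 2}, so $r = r^*$ by uniqueness, whence $q = q_f + \d r = q_f + (q^* - q_f) = q^*$. Feeding this into the true Darcy law yields $K^{-1} q = K^{-1} q^* = \d^* p^*$, so $p^*$ solves \eqref{eq: general step 3} and $p = p^*$ by uniqueness. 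Either route shows $(q,p)$ satisfies both equations of \eqref{eqs: Darcy abstract}.

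The main obstacle is not computational but logical: one must avoid arguing in a circle. The proof of the Step 3 lemma already refers to a "true solution" $p^*$, which tacitly presupposes $q = q^*$; so in a self-contained write-up I would first establish $q = q^*$ (equivalently, $\d^* K^{-1} q = 0$, as above) and only then appeal to Step 3. Once that ordering is fixed, the proof reduces to the two-line verification $\d q = f$ and $K^{-1} q - \d^* p = 0$.
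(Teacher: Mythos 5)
Your proof is correct and its core is exactly the paper's argument: mass balance follows from $\d q = \d(q_f + \d r) = \d q_f = f$ using $\d\circ\d = 0$, and Darcy's law holds because Step 3 defines $p$ by $\d^* p = K^{-1}q$. The additional material on solvability of Step 3 (showing $\d^* K^{-1} q = 0$, or equivalently $q = q^*$ via uniqueness in Step 2) is sound and makes explicit what the paper delegates to the preceding Step 2 and Step 3 lemmas — the paper is not actually circular, since $q=q^*$ is implicit in the uniqueness argument of the Step 2 lemma, but your ordering is the cleaner way to present it.
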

\begin{proof}
    First, \eqref{eq: Darcy law abstract} is satisfied due to \eqref{eq: general step 3}. Second, \eqref{eq: Mass law abstract} is fulfilled by the calculation $\d q = \d(q_f + \d r) = \d q_f = f$.
\end{proof}

% \begin{remark}
%   Note that for $K^{-1}$, we have only used its linearity and positive definiteness. We can therefore generalize this solution strategy further and replace $K^{-1}$ by a linear, positive definite operator. This opens the way to Stokes flow, for which exact complexes in the framework of finite elements have recently been constructed \cite{hu2022family}.
% \end{remark}

\subsection{Discretization of Darcy flow using structure-preserving methods}
\label{sub: Discretization}

We continue with the discrete setting in which we let $P\Lambda^k \subset H\Lambda^k$ be a finite-dimensional subspace for each $k$. The differential $\d_{h, k} : P\Lambda^k \to P\Lambda^{k + 1}$ is defined as the restriction of $\d_k$ to $P\Lambda^k$. We often omit the subscript $k$ on the differential and we assume that $(P\Lambda^{\bullet}, \d_h)$ forms an exact complex. Such exact discrete complexes form an active area of research, see e.g. \cite{arnold2018finite,arnold2006finite,hu2022family}.

As our main example, we focus on the family of trimmed elements of polynomial order $r$, i.e. $P\Lambda^k := P_r^- \Lambda^k$ in the notation of Finite Element Exterior Calculus \cite{arnold2006finite}. This family consists of the Lagrange elements $\mathbb{L}_r$, the N\'ed\'elec\cite{nedelec1980mixed} element of the first kind $\mathbb{N}_{r - 1}$, the Raviart-Thomas\cite{raviart1977mixed} element $\mathbb{RT}_{r - 1}$, and the discontinuous, piecewise polynomials $\mathbb{P}_{r - 1}$. The elements of lowest order, with $r = 1$, are referred to as the Whitney forms, and form the exact complex $(P\Lambda^{\bullet}, \d_h)$:

\begin{equation} \label{eq: de Rham discrete}
\begin{tikzcd}
\mathbb{L}_1 \arrow[r, "\nabla"] &
\mathbb{N}_0 \arrow[r, "\curl"] &
\mathbb{RT}_0 \arrow[r, "\div"] &
\mathbb{P}_0.
\end{tikzcd}
\end{equation}

% \begin{table}[ht]
%   \caption{The Whitney forms, also known as the trimmed finite elements of lowest order.}
%   \label{tab: table 2}
%   \centering

%   \begin{tabular}{lllll}
%   \hline
%   $n$ & $P\Lambda^{n - 3}$ & $P\Lambda^{n - 2}$ & $P\Lambda^{n - 1}$ & $P\Lambda^n$ \\
%   \hline
%   $3$ & $\mathbb{L}_1$ & $\mathbb{N}_0$ & $\mathbb{RT}_0$ & $\mathbb{P}_0$ \\
%   $2$ & 0 & $\mathbb{L}_1$ & $\mathbb{RT}_0$ & $\mathbb{P}_0$ \\
%   \hline
%   \end{tabular}
% \end{table}

In the case of homogeneous boundary conditions on the variable $q \in P \Lambda^{n - 1}$, i.e. $\partial_q \Omega \neq \emptyset$, we consider the subspaces $P_{\partial_q \Omega} \Lambda^k$ in which the degrees of freedom on $\partial_q \Omega$ are set to zero.

The discrete complex $(P\Lambda^\bullet, \d_h)$ is exact (see e.g. \cite{arnold2006finite}) and therefore the three-step technique proposed in Section~\ref{sub: general} is directly applicable.

The first step \eqref{eq: general step 1} can be solved using \emph{any} locally conservative scheme, e.g. with a finite volume method with a two-point flux approximation (TPFA). This leads to a small system consisting only of cell-center pressure unknowns and is therefore relatively inexpensive to compute. Since the TPFA method is not consistent in general, the remaining two steps can be seen as corrections.

The second step requires the operators $\d_h^*\d_h$ and $\d_h\d_h^*$. The former can directly be implemented as
$\langle \d_h r, \d_h \tilde r \rangle = \langle \d r, \d \tilde r \rangle$ since $P\Lambda^k \subset H \Lambda^k$. For the latter operator $\d_h \d_h^*$, we first solve for $\d_h^*r \in P\Lambda^{k - 1}$:
\begin{align} \label{eq: def discrete adjoint}
    \langle \d_h^* r,  \tilde s \rangle
    &= \langle r, \d_h \tilde s \rangle,
    & \forall  \tilde s \in P\Lambda^{k - 1},
\end{align}
and then compute $\langle \d_h^* r, \d_h^* \tilde r \rangle$.
This is computationally costly, so we propose a mass lumping technique on the mass matrix of $P\Lambda^{k - 1}$ in Section~\ref{sec: implementation}. This modification does not change the solution since the only purpose of the term $\d_h\d_h^*$ is to penalize $\d_h^* r$.

\begin{remark}
    We emphasize that \eqref{eq: def discrete adjoint} is solvable for all $r \in P\Lambda^k$ and so $\d_h^*: P\Lambda^k \to P\Lambda^{k - 1}$ is a well-defined operator. However, we generally have $P\Lambda^k \not \subseteq H^* \Lambda^k$, so $\d_h^*$ is not a restriction of $\d^*$. For example, the piecewise constants $\mathbb{P}_0 = P\Lambda^n \subset H \Lambda^n = L^2$ are not contained in $H^* \Lambda^n = H(\nabla)$ but \eqref{eq: def discrete adjoint} nevertheless defines $\d_{h, n - 1}^*$ as a discrete gradient on this space. Analogously, the $\d_{h, k}^*$ operators correspond to a discrete curl on $\mathbb{RT}_r$ and a discrete divergence on $\mathbb{N}_r$.
\end{remark}

The solutions to systems \eqref{eq: general step 3} and \eqref{eq: general step 1} can be obtained by solving an elliptic problem posed on $P\Lambda^n$ and thus, in the lowest order case, concerns only cell-center variables. Similarly, the problem in step \ref{step: 2} concerns degrees of freedom on the mesh edges in 3D and nodes in 2D. We have thus partitioned the original saddle point formulation into three smaller, elliptic problems posed on either the nodes, edges, or cells of the mesh.

Finally, we note that the discretization can be generalized to polyhedral meshes using the virtual element method \cite{beirao2013basic}. In the lowest order case, this amounts to defining a degree of freedom of $P\Lambda^k$ on each $k$-dimensional mesh entity, i.e. on the nodes, edges, faces, or cells.

\subsection{Flow in fractured porous media}
\label{sub: MiDi}

Next, we consider a model of flow in fractured porous media in which the fractures are represented by lower-dimensional manifolds. We start by presenting the subdivision of the domain into subdomains of different dimensions, then define the finite element spaces and governing equations, and finally introduce the relevant exact discrete complex. Thus, we limit ourselves to the discrete setting and refer the interested reader to \cite{boon2021functional,boon2018robust} for the continuous case.

The first step is to partition the domain of computation into the $n$-dimensional bulk matrix, the $(n - 1)$-dimensional fractures and the lower-dimensional intersection lines and points. Specifically, let $\Omega$ be partitioned into open subdomains $\Omega_i$ with $i \in I$ the index and $d_i$ its dimensionality. We assume that each $\Omega_i$ with $d_i < n$ has at least one neighbor $\Omega_j$ such that $d_j = d_i + 1$ and $\Omega_i$ coincides with a part of $\partial \Omega_j$, denoted by $\partial_i \Omega_j$. A precise definition of allowable geometries is given in \cite{boon2021functional}.

On each subdomain $\Omega_i$, we introduce a shape-regular, simplicial mesh $\Omega_{h, i}$. We impose that the meshes are matching in the sense that each $d_i$-dimensional cell of $\Omega_{h, i}$ coincides with a face on $\partial_i \Omega_{h, j}$ for all its neighbors with $d_j = d_i + 1$. Since we only consider the discrete case in this work, we abuse notation and omit the subscript $h$ on $\Omega_{h, i}$.

We group the finite element spaces from Section~\ref{sub: Discretization} as in \cite{nordbotten2017modeling} to define the mixed-dimensional spaces:
\begin{align}
    P\mathfrak{L}^k := \prod_{\substack{ i \in I \\ d_i \ge n - k}} P\Lambda^{d_i - (n - k)}(\Omega_i)
\end{align}

For ease of reference, Table~\ref{tab: table 3} presents the local finite element spaces depending on the dimensionality $d_i$ of the subdomain.

\begin{table}[ht]
    \caption{A mixed-dimensional family of finite elements of lowest order that form an exact discrete complex for $n = 3$.}
    \label{tab: table 3}
    \centering

    \begin{tabular}{lllll}
    \hline
    $d_i$ & $P \mathfrak{L}^0$ & $P \mathfrak{L}^1$ & $P \mathfrak{L}^2$ & $P \mathfrak{L}^3$ \\
    \hline
    $3$ & $\mathbb{L}_1$ & $\mathbb{N}_0$ & $\mathbb{RT}_0$ & $\mathbb{P}_0$ \\
    $2$ & & $\mathbb{L}_1$ & $\mathbb{RT}_0$ & $\mathbb{P}_0$ \\
    $1$ & & & $\mathbb{L}_1$ & $\mathbb{P}_0$ \\
    $0$ & & & & $\mathbb{P}_0$ \\
    \hline
    \end{tabular}
\end{table}

We are interested in a mixed formulation and therefore introduce the flux $\mathfrak{q} \in P \mathfrak{L}^{n - 1}$ and pressure $\mathfrak{p} \in P \mathfrak{L}^n$. Since these are variables defined on subdomains of different dimensionalities, we refer to them as mixed-dimensional and denote them using a Gothic font. We revert to standard font to indicate a restriction to a subdomain, i.e. $p_i := \mathfrak{p}|_{\Omega_i}$.

With the function spaces defined, we continue with the governing equations of mixed-dimensional Darcy flow \cite{berre2021verification}, which are a generalization of \cite{martin2005modeling}. First, we assume Darcy's law tangential to each $\Omega_i$ and normal to each $\partial_i \Omega_j$, with conductivities $K_i$ and $K_{ij}$, respectively. Second, the mass balance equation relates the tangential flux to the contribution from higher-dimensional neighboring subdomains. This leads us to the following equations:
\begin{subequations} \label{eqs: MD Darcy tedious}
\begin{align}
    q_i + K_{i} \nabla_i p_i &= 0
    & \text{on }&\Omega_i,
    & 1 &\le d_i \le n, \\
    \nu_j \cdot q_j + K_{ij} (p_i - p_j) &= 0
    & \text{on }&\partial_i \Omega_j,
    & 0 &\le d_i \le n - 1, \\
    \nabla_i \cdot q_i + \sum_{\substack{j \in I \\ d_j = d_i + 1}} (- \nu_j \cdot q_j)|_{\partial_i \Omega_j} &= f_i
    & \text{on }&\Omega_i,
    & 0 &\le d_i \le n. \label{eq: MD mass balance}
\end{align}
Here, $\nabla_i$ is the del-operator on $\Omega_i$ and $\nu_j$ is the outward oriented, unit normal vector to $\partial \Omega_j$. In \eqref{eq: MD mass balance}, we assume that the first term is zero for $d_i = 0$ and the second term is zero for $d_i = n$.

The following boundary conditions are imposed:
\begin{align} \label{eq: MD Darcy bcs}
    p_i &= 0 & \text{on } &\partial \Omega_i \cap \partial \Omega, &
    \nu_i \cdot q_i &= 0 & \text{on } &\partial_0 \Omega_i.
\end{align}
with $\partial_0 \Omega_i \subseteq \partial \Omega_i$ the fracture tips, i.e. portion of the boundary of $\Omega_i$ that does not border a lower-dimensional subdomain.
\end{subequations}

In order to show that \eqref{eqs: MD Darcy tedious} has the structure \eqref{eqs: Darcy abstract}, we first introduce the following inner products:
\begin{align*}
    \langle \mathfrak{p}, \mathfrak{\tilde p} \rangle_{L^2 \mathfrak{L}^n}
    &:= \sum_{i \in I}
    \langle p_i, \tilde p_i \rangle_{\Omega_i},
    \\
    \langle \mathfrak{q}, \mathfrak{\tilde q} \rangle_{L^2 \mathfrak{L}^{n - 1}}
    &:= \sum_{\substack{i \in I \\ d_i \ge 1}}
    \bigg(
    \langle q_i, \tilde q_i \rangle_{\Omega_i}
    +
    \sum_{\substack{j \in I \\ d_j = d_i - 1}}
    \langle \nu_i \cdot q_i, \nu_i \cdot \tilde q_i \rangle_{\partial_j \Omega_i}
    \bigg).
\end{align*}

Second, we define mixed-dimensional divergence $(\mathfrak{D}\cdot): P \mathfrak{L}^{n - 1} \to P \mathfrak{L}^n$ as
\begin{align*}
    (\mathfrak{D} \cdot \mathfrak{q})|_{\Omega_i} &:= \nabla_i \cdot q_i + \sum_{\substack{j \in I \\ d_j = d_i + 1}} (- \nu_j \cdot q_j)|_{\partial_i \Omega_j}
    & \forall i &\in I.
\end{align*}

Finally, we collect the source terms to create $\mathfrak{f} \in P\mathfrak{L}^n$ such that $\mathfrak{f}|_{\Omega_i} = f_i$ and similarly, we define $\mathfrak{K}$ such that it equals $K_i$ on $\Omega_i$ and $K_{ij}$ on $\partial_i \Omega_j$.
The weak formulation of the fracture flow problem \eqref{eqs: MD Darcy tedious} then becomes (cf. \cite{boon2018robust} for the derivation): Find $(\mathfrak{q}, \mathfrak{p}) \in P \mathfrak{L}^{n - 1} \times P \mathfrak{L}^n$ such that
\begin{subequations} \label{eqs: MD Darcy system}
\begin{align}
    \langle \mathfrak{K}^{-1} \mathfrak{q}, \mathfrak{\tilde q} \rangle_{L^2 \mathfrak{L}^{n - 1}}
    - \langle \mathfrak{p}, \mathfrak{D} \cdot \mathfrak{\tilde q} \rangle_{L^2 \mathfrak{L}^n}
    &= 0,
    & \forall \mathfrak{\tilde q} &\in P \mathfrak{L}^{n - 1}, \\
    \langle \mathfrak{D} \cdot \mathfrak{q}, \mathfrak{\tilde p} \rangle_{L^2 \mathfrak{L}^n}
    &= \langle \mathfrak{f}, \mathfrak{\tilde p} \rangle_{L^2 \mathfrak{L}^n},
    & \forall \mathfrak{\tilde p} &\in P \mathfrak{L}^n.
\end{align}
\end{subequations}

We observe that \eqref{eqs: MD Darcy system} has the structure \eqref{eqs: Darcy abstract}. Hence, we next require a mixed-dimensional curl operator $(\mathfrak{D} \times)$ in order to generate a solenoidal field. It is important to note that here, solenoidal means that $\mathfrak{D} \cdot \mathfrak{q} = 0$ and this is not the same as imposing $\nabla_i \cdot q_i = 0$ on all $i \in I$.
In order to apply our proposed solution technique, we therefore require the mixed-dimensional analogues of the curl and gradient. These differential operators were introduced in \cite{boon2021functional} and form the following co-chain complex:
\begin{equation} \label{eq: MD-de Rham}
\begin{tikzcd}
P\mathfrak{L}^0 \arrow[r, "\mathfrak{D}"] &
P\mathfrak{L}^1 \arrow[r, "\mathfrak{D} \times"] &
P\mathfrak{L}^2 \arrow[r, "\mathfrak{D} \cdot"] &
P\mathfrak{L}^3.
\end{tikzcd}
\end{equation}

We recall that the spaces $P\mathfrak{L}^k$ are given by the columns of Table~\ref{tab: table 3}. Moreover, the mixed-dimensional gradient $(\mathfrak{D})$ and curl $(\mathfrak{D} \times)$ are defined as follows:
\begin{subequations}
\begin{align}
    (\mathfrak{D} \mathfrak{s})|_{\Omega_i} &:=
    \begin{cases}
        \nabla_i s_i, & d_i = 3, \\
        \sum_{\substack{j \in I \\ d_j = 3}} (- (\nu_i \cdot\nu_j) s_j)|_{\partial_i \Omega_j}, & d_i = 2.
    \end{cases} \\
    (\mathfrak{D} \times \mathfrak{r})|_{\Omega_i} &:=
    \begin{cases}
        \nabla_i \times r_i, & d_i = 3, \\
        \nabla_i^\perp r_i + \sum_{\substack{j \in I \\ d_j = 3}} (\nu_j \times r_j)|_{\partial_i \Omega_j}, & d_i = 2, \\
        \sum_{\substack{j \in I \\ d_j = 2}} ((\nu_j^\perp \cdot \tau_i) r_j)|_{\partial_i \Omega_j}, & d_i = 1.
    \end{cases}
\end{align}
\end{subequations}
Here, $\nu_i|_{\partial_i \Omega_j}$ for $d_i = 2$ is the unit vector normal to $\Omega_i$ that forms a positive orientation with the tangent bundle of $\Omega_i$, according to the right-hand rule. Moreover, $\tau_i$ for $d_i = 1$ is the unit vector tangent to $\Omega_i$.

The fact that \eqref{eq: MD-de Rham} is exact was shown in \cite{licht2017complexes,boon2021functional} and, in turn, the solution technique of Section~\ref{sub: general} is directly applicable.

\section{A Reduced Basis Method ensuring local mass conservation}
\label{sec: RBM}

The aim of this section is to augment the solution technique proposed in Section~\ref{sec: solution technique} by replacing step \ref{step: 2} with a reduced basis method. The mapping that we aim to approximate is $(K, q_f) \to r$. We utilize a splitting into a computationally costly \emph{off-line} stage and an efficient \emph{on-line} stage. In the \emph{off-line} stage, we first compute the mapping $f \to q_f$ given by \eqref{eq: general step 1}, e.g. by saving an $LU$-decomposition. Then, we construct a reduced basis approximation to the mapping $(K, q_f) \to r$ given by system \eqref{eq: general step 2}. The details of this construction are given in Section~\ref{sub: RBM construction}. 

The \emph{on-line} stage then amounts to the following steps:
\begin{enumerate}
    \item Given $f$, construct $q_f$ by solving \eqref{eq: general step 1}.
    \item Given $K$ and $q_f$, compute $r$ using the reduced basis.
    \item Compute $q := q_f + \d r$ and construct $p$ by solving \eqref{eq: general step 3}.
\end{enumerate}

We emphasize that the solution obtained from this method is guaranteed to conserve mass locally, hence achieving our main goal. In fact, the error arising from the reduced basis approximation is contained in the contribution $\d r$, which is divergence-free by construction.

\subsection{Construction of the reduced basis by Proper Orthogonal Decomposition}
\label{sub: RBM construction}

Let us focus on the second step in this algorithm in the discrete setting. Then Problem~\eqref{eq: general step 2} is of the form:
\begin{align} \label{eq: step 2 discrete}
    \mathsf{A^K} \mathsf{r} = \mathsf{b_f}
\end{align}
In which the matrix $\mathsf{A^K}$ depends on the material parameter $K$ and the vector $\mathsf{b_f}$ on the right-hand side is determined by the source term $f$. In the ``offline'' stage of the method, we now construct a reduced basis that captures the influence of these parameters on the solution. We use the conventional Proper Orthogonal Decomposition approach to achieve this.

In particular, we first choose $n_S$ values for the parameter pair $(K, f)$ and solve \eqref{eq: step 2 discrete} for each value pair. For $1 \le i \le n_S$, the solution vectors $\mathsf{r}_i \in \mathbb{R}^{n_r}$ are known as \emph{snapshots}, and we collect these to form the columns of the matrix $\mathsf{S} \in \mathbb{R}^{n_r \times n_S}$.

Next, we compute the singular value decomposition of $\mathsf{S}$ such that
\begin{align} \label{eq: SVD}
    \mathsf{S} = \mathsf{U \Sigma V^T}
\end{align}
In which $\mathsf{U} \in \mathbb{R}^{n_r \times n_r}$ and $\mathsf{V} \in \mathbb{R}^{n_S \times n_S}$ are orthogonal matrices and $\mathsf{\Sigma}$ is a diagonal matrix containing the singular values $\sigma_i$. For given threshold value $\varepsilon$, we select $n_m$ as the smallest index that satisfies
\begin{align}
    \sigma_{n_m}
    \ge \varepsilon
    % I(n_m) =
    % \frac{\sum_{i = 1}^{n_m} \sigma_i^2}{\sum_{i = 1}^{n_S} \sigma_i^2}
    % \ge 1 - \varepsilon^2
\end{align}

We then extract the $n_m$ most important modes by restricting $\mathsf{U}$ to its first $n_m$ columns, creating $\mathsf{U_m} \in \mathbb{R}^{n_r \times n_m}$. The reduced problem now becomes:
\begin{align*}
    \mathsf{(U_m^T A^K U_m) r_m} = \mathsf{U_m^T b_f}.
\end{align*}

Note that this is a system with $n_m$ unknowns and since we typically have $n_m \ll n_r$, it is significantly less expensive to solve than the original system \eqref{eq: step 2 discrete}. Finally, we have $\mathsf{r} = \mathsf{U_m r_m}$ as the reduced basis approximation to the full order system.

\section{Numerical results}
\label{sec: Numerics}

This section concerns the implementation, set-up, and results of the numerical experiments. Section~\ref{sec: implementation} provides guidance into the numerical implementation of the proposed procedure and Section~\ref{sec: Numerical experiments} presents the results.

\subsection{Implementation}
\label{sec: implementation}

Since the spaces are finite dimensional, we can represent each variable $p \in P\Lambda^k$ as a vector $\mathsf{p} \in \mathsf{P}\Lambda^k := \mathbb{R}^{n_k}$ containing the values of its $n_k$ degrees of freedom. Moreover, the linear operators can be represented by matrices, e.g. the mass matrix $\mathsf{M_k}$ is given by:
\begin{align*}
    \mathsf{\tilde p^T M_k p} &= \langle p , \tilde p \rangle.
\end{align*}
Similarly, let $\mathsf{B_k}$ be the matrix representation of the differential $\d_{h, k}$:
\begin{align*}
    \mathsf{p^T \hat B_k q} &:= \langle \d_{h, k} q, p \rangle, &
    \mathsf{B_k} &:= \mathsf{M_{k + 1}^{-1} \hat B_k}.
\end{align*}
\begin{remark}
    Depending on the implementation, it may be easier to compute $\mathsf{B_k}$ directly and set $\mathsf{\hat B_k = M_{k + 1} B_k}$. In fact, the degrees of freedom in $P\Lambda^k$ and $P\Lambda^{k + 1}$ can be chosen such that all non-zero entries of $\mathsf{B_k}$ are $\pm 1$.
\end{remark}

These matrices allow us to compute the operator $\d_{h, k}^* \d_{h, k}$ as
\begin{align}
    \langle \d_{h, k} q, \d_{h, k} \tilde q \rangle
    =
    \mathsf{
    \tilde q^T (\hat B_k^T M_{k + 1}^{-1} \hat B_k) q = \tilde q^T (B_k^T M_{k + 1} B_k) q
    }.
\end{align}

On the other hand, the penalization term $\d_{h, k} \d_{h, k}^*$ requires the more involved computation
\begin{align}
    \langle \d_{h, k}^* q,  \d_{h, k}^*q \rangle
    =
    \mathsf{
    \tilde q^T (\hat B_k M_k^{-1} \hat B_k^T) q
    % = \tilde q^T (M_{k + 1} B_k M_k^{-1} B_k^T M_{k + 1}) q
    }.
\end{align}

The inversion of the mass matrix $\mathsf{M_k}$ is typically not feasible and, in our case, not necessary. We proceed by letting $\mathsf{L_k}$ be an easily invertible matrix obtained after mass lumping of $\mathsf{M_k}$. This leads us to the following approximation:
\begin{align}
    \langle \d_{h, k}^* q,  \d_{h, k}^*q \rangle
    \approx
    \mathsf{
    \tilde q^T (\hat B_k L_k^{-1} \hat B_k^T) q
    }.
\end{align}
For $k = n$, we note that $\d_{h, k} \d_{h, k}^*$ corresponds to a discrete Laplace operator and we can choose $\mathsf{L_{n - 1}}$ such that the resulting scheme is a TPFA finite volume method \cite{baranger1996connection,boffi2013mixed} for the Laplace equation. This is also possible in the mixed-dimensional case \cite{boon2020convergence}.

Finally, the conductivity induces a scaled inner product and we denote the corresponding matrix by $\mathsf{M_{n - 1}^K}$, i.e.
\begin{align*}
    \mathsf{\tilde q^T M_{n - 1}^K q} &= \langle K^{-1} q , \tilde q \rangle.
\end{align*}

With these matrices and vectors defined, we now repeat the three-step procedure to guide implementation:
\begin{enumerate}
    \item Solve the following system for $\mathsf{p_f} \in \mathsf{P}\Lambda^n$:
    \begin{align}
        \mathsf{(\hat B_{n - 1} L_{n - 1}^{-1} \hat B_{n - 1}^T) p_f} = \mathsf{f},
    \end{align}
    and set $\mathsf{q_f} = \mathsf{L_{n - 1}^{-1} \hat B_{n - 1}^T p_f}$.
    \item Use the reduced basis method to approximate $\mathsf{r} \in \mathsf{P}\Lambda^{n - 2}$ that satisfies
    \begin{align}
        (\mathsf{B_{n - 2}^T M_{n - 1}^{K} B_{n - 2}
        + \hat B_{n - 3} L_{n - 3}^{-1} \hat B_{n - 3}^T} ) \mathsf{r}
        =
        (- \mathsf{B_{n - 2}^T M_{n - 1}^K}) \mathsf{q_f}.
    \end{align}
    \item Set $\mathsf{q} = \mathsf{q_f + B_{n - 2}} \mathsf{r}$. Solve the following system for $\mathsf{p} \in \mathsf{P}\Lambda^n$:
    \begin{align}
        \mathsf{
            (\hat B_{n - 1} L_{n - 1}^{-1} \hat B_{n - 1}^T) p = (\hat B_{n - 1} L_{n - 1}^{-1} M_{n - 1}^K) q
        }.
    \end{align}
\end{enumerate}

It is important to note that the first and third step amount to solving the same system. Thus, for computational efficiency, we save the $LU$-decomposition of this matrix in the \emph{off-line} stage.

Moreover, the replacement of $\mathsf{M_{n - 3}^{-1}}$ by $\mathsf{L_{n - 3}^{-1}}$ in the second step changes the solution $\mathsf{r}$ but not $\mathsf{B_{n - 2} r}$. Formally, the mass-lumping changes the orthogonality condition with respect to $\Ker(\mathsf{B_{n - 2}})$ by employing a different inner product. However, the augmentation lies in the kernel of the curl operator and therefore does not affect the final solution $\mathsf{q}$.

\begin{remark}
    For simplicity of implementation, we may set $\mathsf{L_k = I}$ and substitute $\mathsf{B_{n - 3}}$ for $\mathsf{\hat B_{n - 3}}$ in the second step. This lets the first and third step become purely geometrical in the sense that the matrices only depend on the connectivity of the mesh entities. We note that this simplification will not affect the solvability of the systems, or the final solution, but requires proper scaling with the mesh size $h$.
\end{remark}

\begin{remark}
    If we set $\mathsf{L_{n - 1} = M_{n - 1}^K}$, then the first step amounts to solving the Schur complement system and the true solution would be obtained directly. From an algebraic perspective, the proposed technique therefore corresponds to an approximation of the Schur complement system with suitable corrections.
\end{remark}

Finally, we emphasize that this implementation is valid for both the mixed finite element method and the virtual element method \cite{beirao2013basic}. The difference lies in the computation of the mass matrices.

For this work, the numerical experiments were implemented using PyGeoN \cite{pygeon}, an open-source Python package. The mixed-dimensional structure was used from PorePy \cite{keilegavlen2021porepy}, with the grids created using the meshing software GMSH \cite{geuzaine2009gmsh}. Finally, the mixed-dimensional curl operator was adapted from \cite{budisa2020mixed}.

\subsection{Numerical experiments}
\label{sec: Numerical experiments}

We investigate the performance of our proposed technique using three test cases, of varying complexity. In order to exhibit the wide applicability, the first case simulates a three-dimensional layered porous medium problem, the second concerns a two-dimensional problem with fractures on a polygonal mesh, and the third consists of a three-dimensional fractured porous medium problem. In each case, we vary the model parameters and follow Section~\ref{sub: RBM construction} to generate a reduced basis. The resulting method is compared to a reference solution, obtained by solving the original, full-order model for a representative choice of parameters.

\begin{table}[ht]
    \caption{The solution procedure solves three smaller, symmetric positive definite systems instead of the full-order model (FOM), which has a saddle-point structure. By approximating the second step with a reduced basis method (RBM) using a threshold value of $\varepsilon = 10^{-7}$, a small error is introduced but the local mass balance is preserved.}
    \label{tab: summary numerics}
    \centering

    \begin{tabular}{r|rrr|rrr}
    \hline \rule{0pt}{2.6ex}
     & \multicolumn{3}{c|}{Number of degrees of freedom}
     & \multicolumn{3}{c}{Relative error due to RBM} \\
    Case & FOM & Steps 1,3 & Step 2 (RBM) &
    Pressure & Flux & Mass \\
    \hline \rule{0pt}{2.6ex}
        1 & 75,264 & 24,576 & 31,024 (19)
        & 1.59e-07 & 1.91e-07 & 3.94e-16\\
        2 & 2,868 & 993 & 865 (53)
        & 6.37e-05 & 1.14e-07 & 2.47e-17\\
        3 & 123,583 & 39,699 & 53,007 (27)
        & 5.59e-10 & 3.41e-08 & 5.07e-16\\
    \hline
    \end{tabular}
\end{table}

We summarize the main observations next, based on the results presented in Table~\ref{tab: summary numerics}.
First, we note that the number of degrees of freedom is reduced by a factor three, approximately, when comparing the full-order model to the systems in the first and third step. Recall that these are cell-centered, symmetric positive definite problems and are therefore amenable to a range of efficient solvers.

Second, we observe in all cases that the solution to the original problem is recovered if the system \eqref{eq: general step 2} in the second step is solved exactly. This verifies the exactness of the discrete complexes.

Third, our procedure provides a solid basis for the use of inexact solvers, as we explore here with RBM, because the introduced error can be contained to the constitutive law. This is reflected in the final three columns of Table~\ref{tab: summary numerics}, which shows that the mass balance equation is satisfied up to machine precision. The reported values concerning the pressure and flux are relative errors with respect to the $L^2$-norm. For the flux, this is equivalent to the relative error in the $H(\div)$ norm, due to the local mass conservation.

Fourth, the choice of a threshold value on the singular values is directly reflected in the accuracy of the solution. By lowering this threshold, more modes are used and a better approximation of the solution is obtained. This trade-off between accuracy and computational cost can be adjusted according to the problem at hand.

Fifth, we observe numerically that the number of significant modes does not dependent strongly on the mesh size. This can be explained by the fact that the response of the solution to the model parameter is the same on different meshes. In turn, a reduced basis formed on a coarse mesh can provide valuable insight for finer meshes, allowing for an efficient choice on the number of necessary snapshots.

We continue this section with separate descriptions of the three numerical test cases and present the corresponding, case-specific observations.
In each case, we set the following, parametrized boundary condition:
\begin{align}
    p(\bm{x})|_{\bm{x} \in \partial \Omega} &= \bar{\bm{\alpha}} \cdot \bm{x}, &
    \bar{\bm{\alpha}} &\in [0, 1]^n,
\end{align}
with $n$ the dimension of $\Omega$.

\subsubsection{A three-dimensional, layered porous medium}

As a first test case, we consider a set-up that emulates a layered porous medium. Let the domain $\Omega$ be the unit cube, subdivided into four equal, horizontal slabs. The bottom and third layer form the subdomain $\Omega_0$ whereas the remaining two layers form $\Omega_1$. We parametrize the conductivity, source term, and boundary conditions as follows:
\begin{align*}
    K(\bm{x}) &=
    \begin{cases}
        1, & \bm{x} \in \Omega_0,   \\
        \bar{K}, & \bm{x} \in \Omega_1,
    \end{cases} & &
    \begin{aligned}
        \\
        \bar{K} \in \left[10^{-5}, 10^{5}\right],
    \end{aligned} \\
    f(\bm{x}) &= \bar{f}, &
    &\bar{f} \in [-1, 1]
    % , \\
    % p(\bm{x})|_{\bm{x} \in \partial \Omega} &= \bar{\bm{\alpha}} \cdot \bm{x}, &
    % \bar{\bm{\alpha}} &\in [0, 1]^3
    .
\end{align*}

The problem is discretized using the Raviart-Thomas pair of lowest order $(\mathbb{RT}_0, \mathbb{P}_0)$ on a regular, tetrahedral grid with typical mesh size $h := 2^{-3}$. The parameter values for the snapshots are generated using a Latin hypercube sampling in which $\bar{f}$ and $\bar{\bm{\alpha}}$ are equi-distributed and $\bar{K}$ is distributed log-uniformly.
We generate 44 snapshots and compute the singular value decomposition \eqref{eq: SVD}. The singular values $\sigma_i$ are illustrated in Figure~\ref{fig: case 1}(left).

\begin{figure}[htb]
    \centering
    \includegraphics[height=4.4cm]{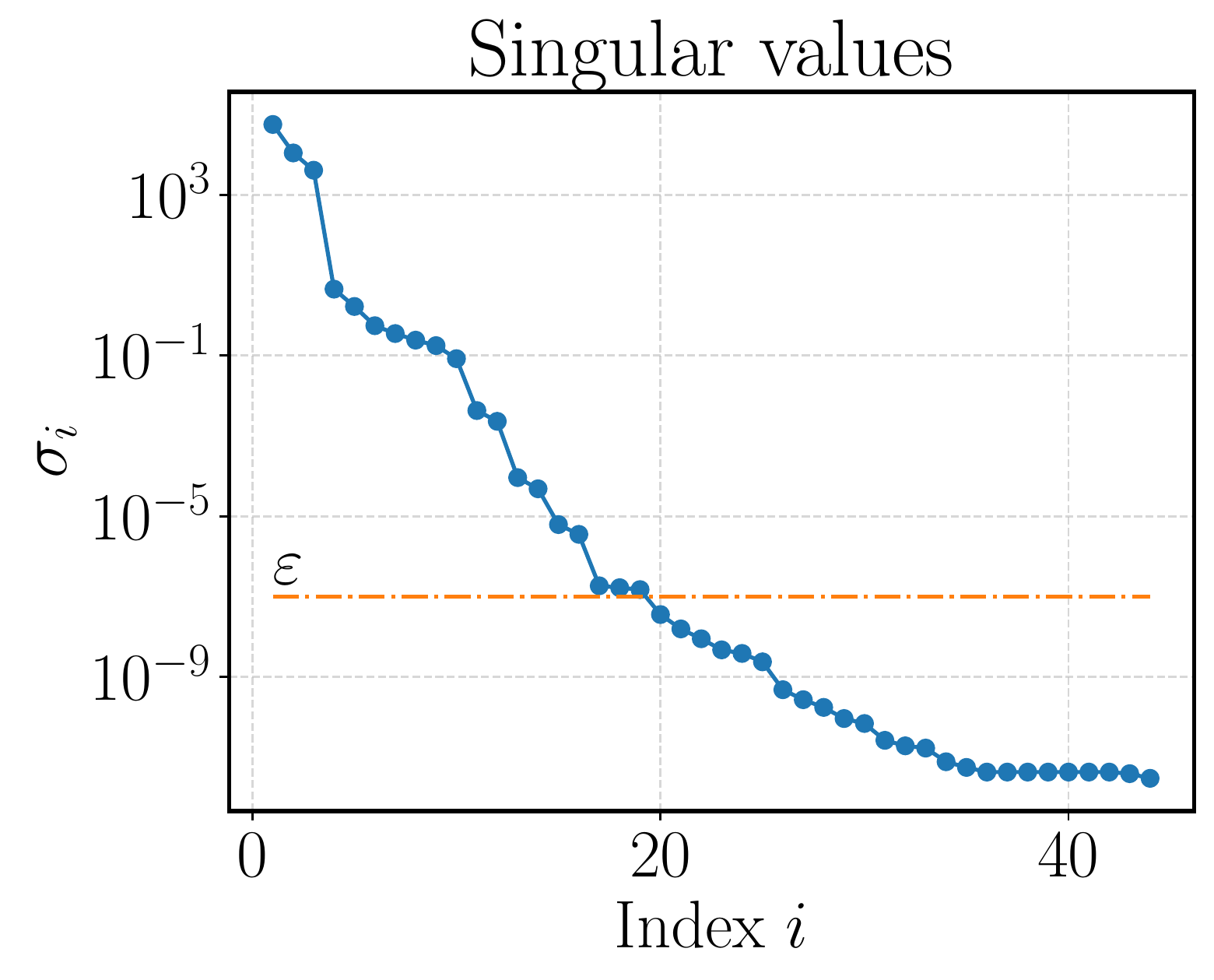}
    \
    \includegraphics[height=4.4cm]{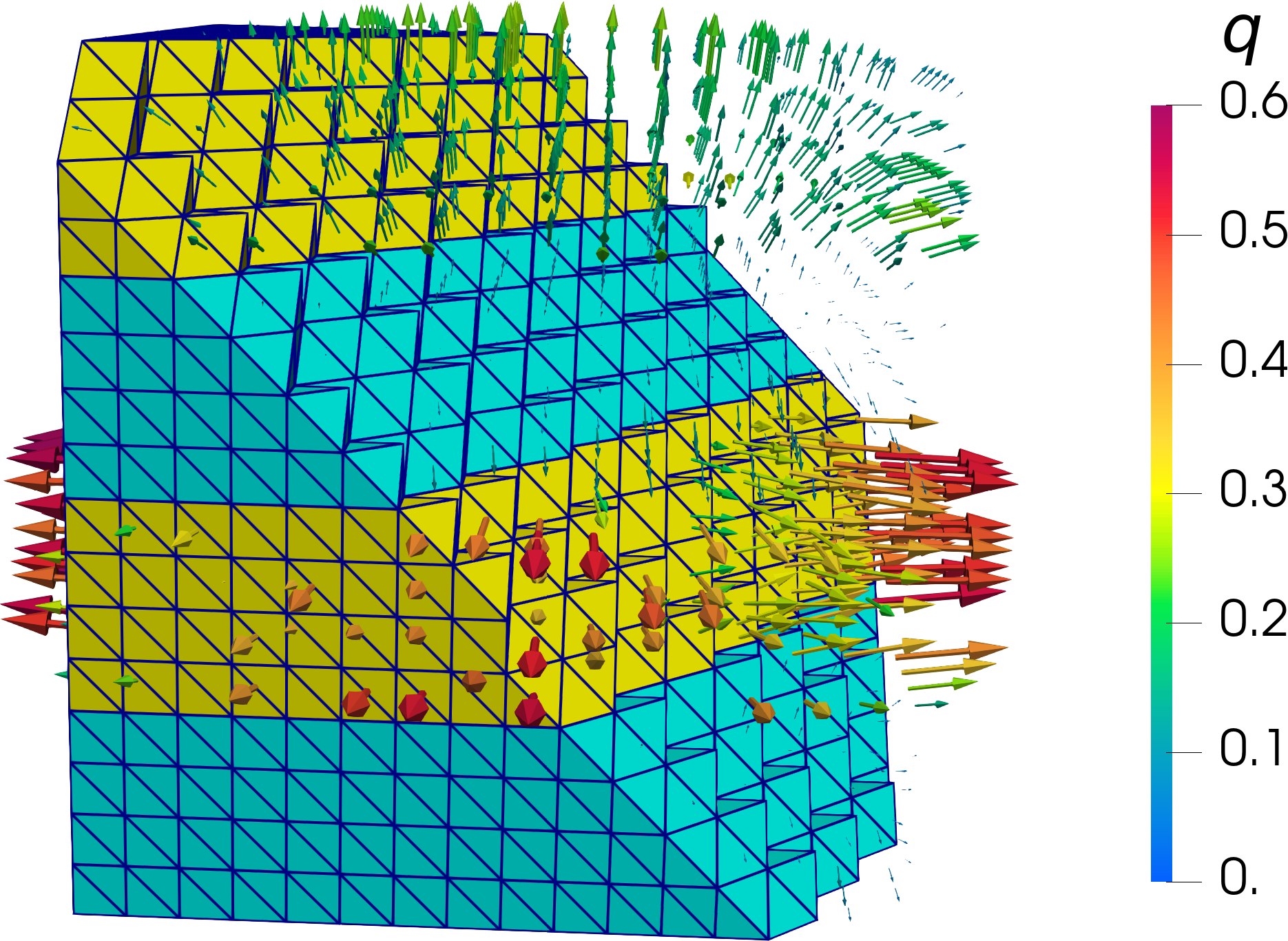}
    \caption{(left) The singular values decay rapidly so that only the first 19 singular values are larger than the threshold value $\varepsilon$. The corresponding modes form the reduced basis. (right) The reference solution of the flux superimposed on the layered conductivity field, with approximately half of the domain shown for the sake of visibility. The yellow subdomain is more permeable in this reference case.}
    \label{fig: case 1}
\end{figure}

The threshold value is set to $\varepsilon = 10^{-7}$ and we consider the parameter set $(\bar{K}, \bar{f}, \bar{\bm{\alpha}}) := (10^3, 1, \bm{0})$. The solution obtained from the three-step procedure with RBM is compared to the solution of the full-order model, illustrated in Figure~\ref{fig: case 1}(right). As shown in Table~\ref{tab: summary numerics}, we obtain a relative error on the order of $10^{-7}$ for both pressure and flux. Thus, the second step can be reduced to a system of merely 19 degrees of freedom and yield an accurate approximation of the solution.

\subsubsection{A two-dimensional fracture flow problem on a polygonal mesh}

Our second test case introduces two complexities, namely the incorporation of a fracture network and the use of a polygonal mesh, cf. Figure~\ref{fig: geometry case 2}. The former is taken care of using the mixed-dimensional differential operators from Section~\ref{sub: MiDi}. The latter is handled by discretizing with the mixed virtual element method of lowest order.

The geometry and material parameters are based on \cite[Case 3]{flemisch2018benchmarks}. A heterogeneity is introduced by letting two fractures, denoted $\Omega_f^{-}$, be blocking and letting the remaining eight fractures, $\Omega_f^{+}$, be conductive. The fracture network is illustrated in Figure~\ref{fig: geometry case 2}(left). We denote the surrounding bulk matrix by $\Omega_m$ and set the following conductivities:
\begin{align*}
    K(\bm{x}) &=
    \begin{cases}
        1, & \bm{x} \in \Omega_m,   \\
        K^{-}, & \bm{x} \in \Omega_f^{-},   \\
        K^{+}, & \bm{x} \in \Omega_f^{+},
    \end{cases} &
    &\begin{aligned}
        \\
        K^{-} &\in \left[10^{-5}, 10^{-3}\right], \\
        K^{+} &\in \left[10^{3}, 10^{5}\right].
    \end{aligned} %\\
    % p(\bm{x})|_{\bm{x} \in \partial \Omega} &= \bar{\bm{\alpha}} \cdot \bm{x}, &
    % &\bar{\bm{\alpha}} \in [0, 1]^2
\end{align*}
The source term $f$ is set to zero. Let the aperture of each fracture be $\epsilon = 10^{-4}$. The effective conductivities, i.e. $K_i$ and $K_{ij}$ in \eqref{eqs: MD Darcy tedious}, are set as follows. For each fracture, we set $K_i := \epsilon K$ internally and $K_{ij} := \frac{2}{\epsilon} K$ on the interface with the bulk matrix, with $K$ the fracture conductivity. Finally, at each intersection point, we set the conductivity $K_{ij} := \frac{2}{\epsilon} K^\pm$ with $K^{\pm}$ the harmonic average of the conductivities of the adjacent fractures.

\begin{figure}[htb]
    \centering
    \includegraphics[height=4.75cm]{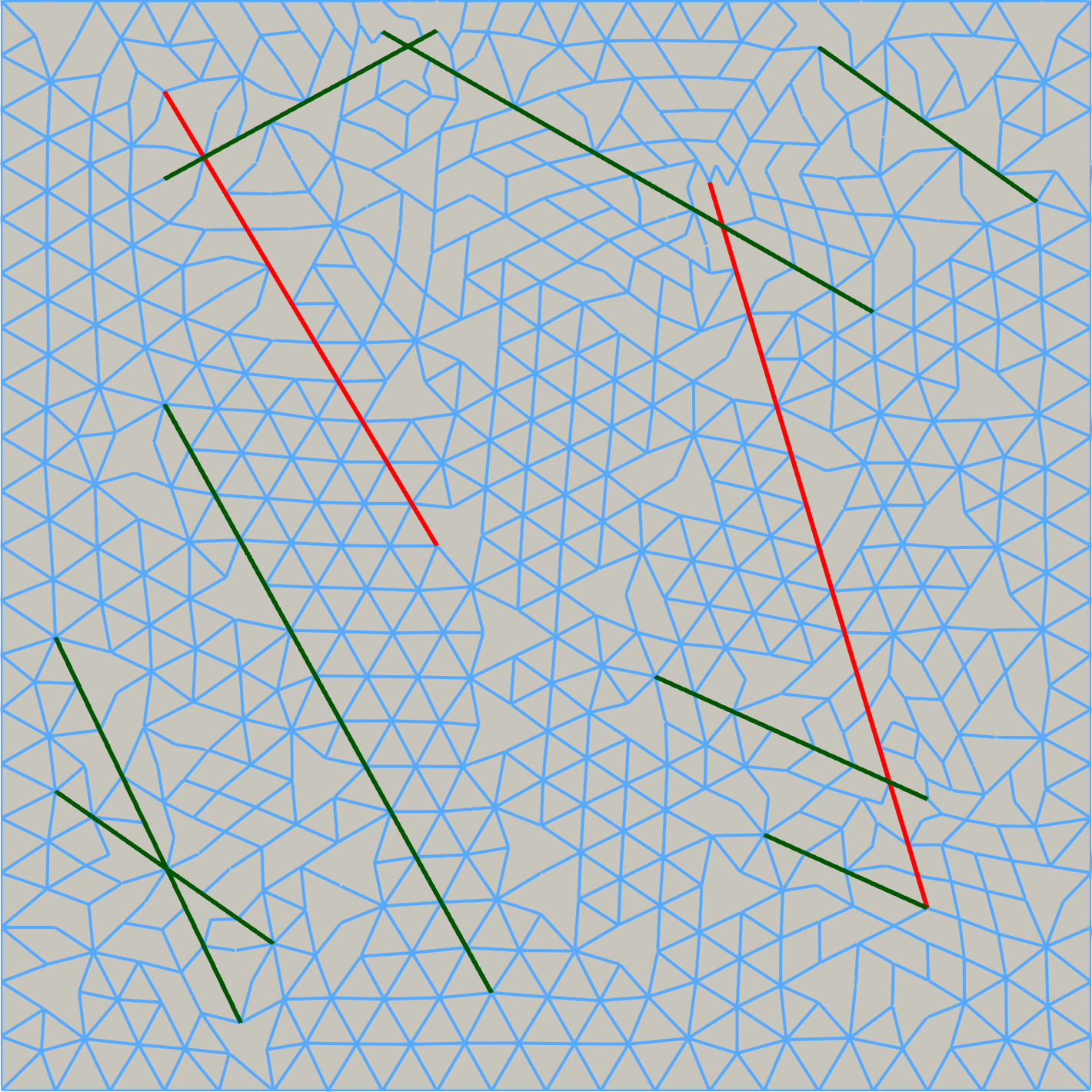}
    \
    \includegraphics[height=4.75cm]{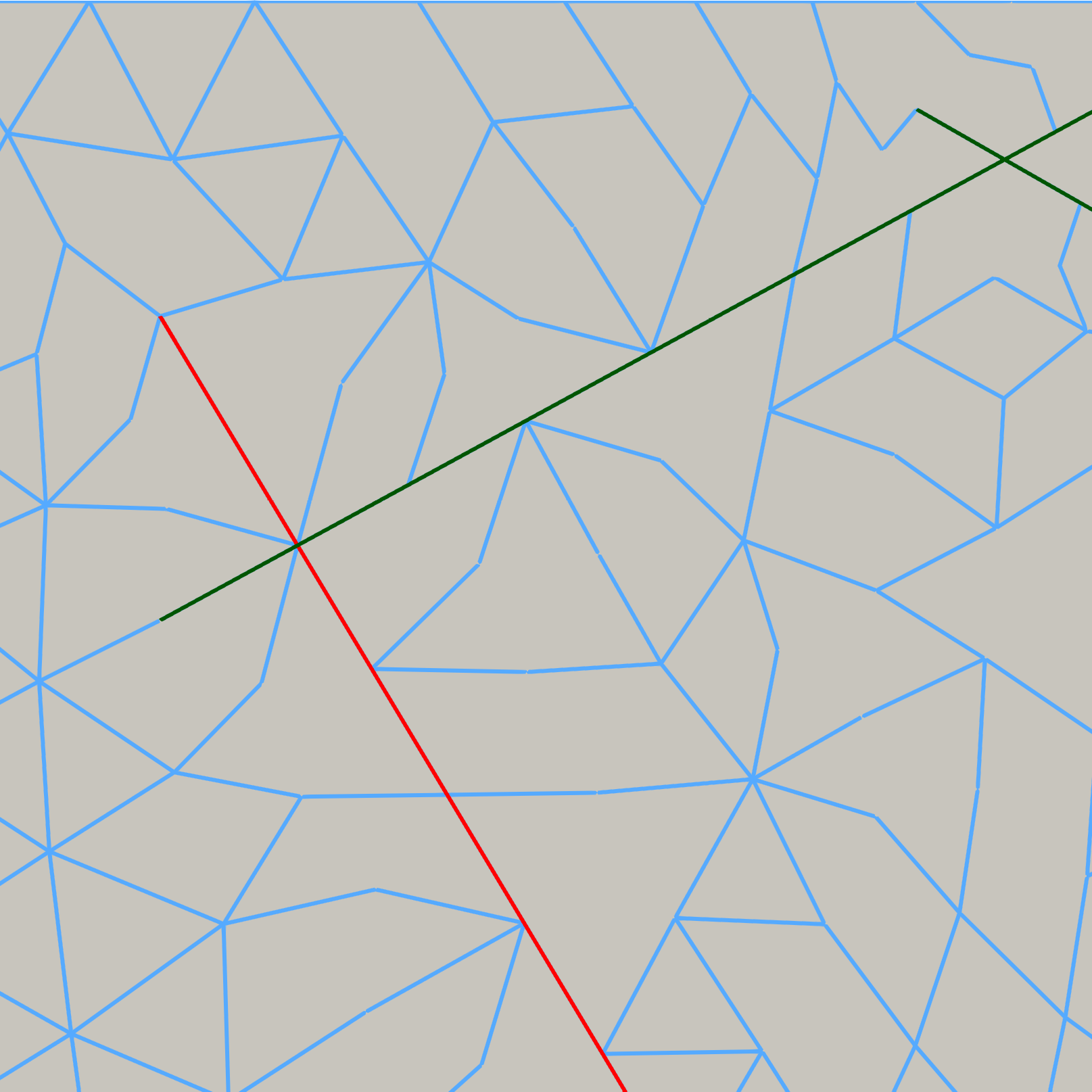}
    \caption{(left) The fracture network in the second test case with the conductive and blocking fractures highlighted in red and green, respectively. (right) A depiction of the top left of the domain showing the conforming, polygonal mesh on which the mixed virtual element spaces are defined.}
    \label{fig: geometry case 2}
\end{figure}

We emphasize that there is a no-flux condition at the fracture tips, cf. \eqref{eq: MD Darcy bcs}. These are essential boundary conditions in the mixed formulation and, in turn, we discretize the fracture flux variable on the subspace of functions that have zero normal trace at tips.

\begin{figure}[htb]
    \centering
    \includegraphics[height=4.75cm]{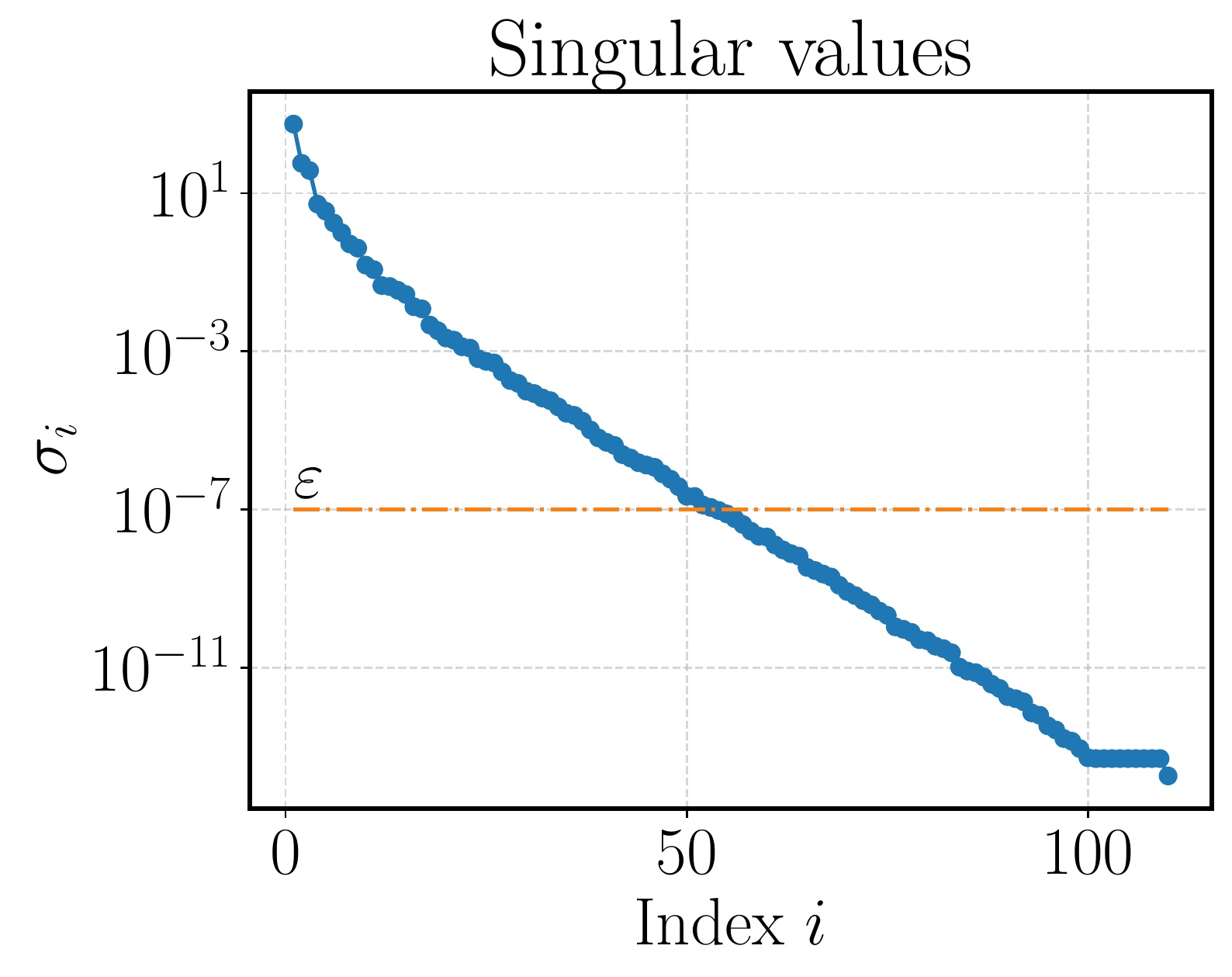}
    \
    \includegraphics[height=4.75cm]{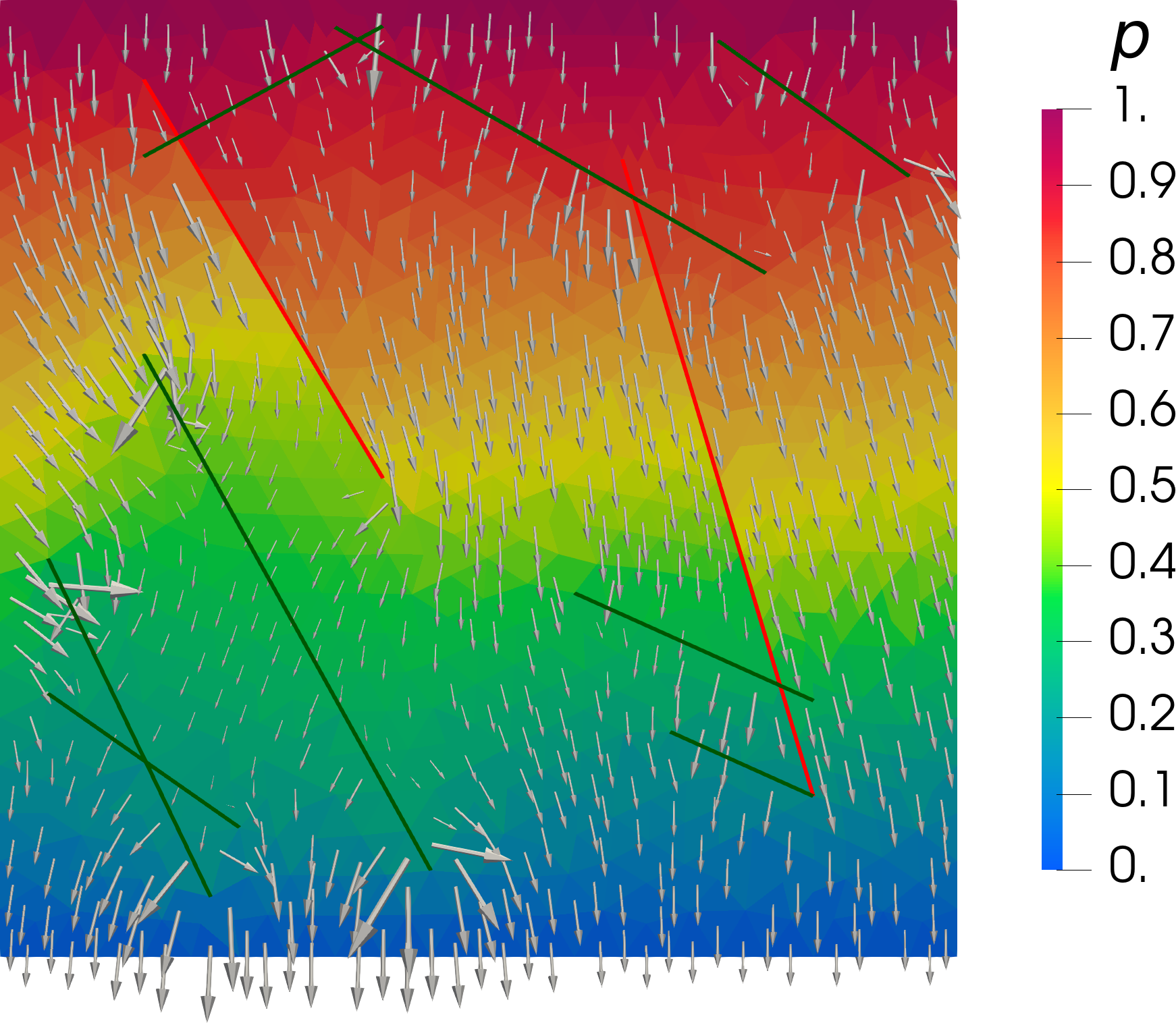}
    \caption{(left) The added complexity caused by the fracture network leads to a higher number of modes in the reduced basis. (right) The reference solution with the flux field superimposed on the pressure distribution. Note that the flow aligns itself tangentially with the blocking features and normally with the conductive fractures, as expected.}
    \label{fig: case 2}
\end{figure}

We compute 120 snapshots according to a Latin hypercube sampling of the parameters in which the conductivities are again log-uniformly distributed. The resulting singular values are presented in Figure~\ref{fig: case 2}(left). The decay of the singular values is significantly slower in this case compared to Figure~\ref{fig: case 1}, due to the added complexity caused by the fracture network. In fact, though the maximal value is approximately two orders lower than in the first case, it requires 53 modes to reach the threshold value of $\varepsilon = 10^{-7}$.

The parameters for the reference solution are $(K^-, K^+) = (10^{-4}, 10^4)$ as in \cite{flemisch2018benchmarks} and we set the boundary conditions with $\bar{\bm{\alpha}} = (0, 1)$ to induce a downward flow.

Finally, we remark on the number of degrees of freedom in the second step. The space on which the curl acts, $H \Lambda^{n - 2}$, is defined on the nodes of the two-dimensional mesh. This means that the solenoidal correction $(\mathfrak{D} \times \mathfrak{r})$ is completely dictated by degrees of freedom in the bulk, not the fractures or their intersections. Similarly, in 3D the space is $P \mathfrak{L}^1$ is only defined on manifolds of dimensions two and three, as is the case in the next numerical experiment.

\subsubsection{A three-dimensional fractured porous medium}

Our final test case is based on the regular fracture network of \cite[Case 4]{berre2021verification}. In order to incorporate fracture tips, we enlarge the domain from the unit cube to $[-0.1, 1.1]^3$. The network $\Omega_f$ is thereby fully immersed in the computational domain and we refer to the surrounding bulk as $\Omega_m$. Let us set the following parameters:
\begin{align*}
    K(\bm{x}) &=
    \begin{cases}
        1, & \bm{x} \in \Omega_m, \\
        \bar{K}, & \bm{x} \in \Omega_f,
    \end{cases} &
    &\begin{aligned}
        \\
        \bar{K} &\in \left[10^{3}, 10^{5}\right],
    \end{aligned} \\
    f(\bm{x}) &=
    \begin{cases}
        0, & \bm{x} \in \Omega_m,   \\
        \bar{f}, & \bm{x} \in \Omega_f,
    \end{cases} &
    &\begin{aligned}
        \\
        \bar{f} &\in \left[-1, 1\right].
    \end{aligned} %\\
    % p(\bm{x})|_{\bm{x} \in \partial \Omega} &= \bar{\bm{\alpha}} \cdot \bm{x}, &
    % &\bar{\bm{\alpha}} \in [0, 1]^3.
\end{align*}

Thus, we consider a permeable fracture network on which we introduce a mass source. Again, the effect conductivities are obtained by scaling with the aperture $\epsilon = 10^{-4}$. In particular, we define $K_i := \epsilon^{n - d_i} \bar{K}$ on each lower-dimensional manifold $\Omega_i$ of dimension $d_i$ and $K_{ij} := \frac{2}{\epsilon} \bar{K}$ on each interface $\Gamma_{ij}$. As a result of integration in the normal directions, the effective source term becomes $f_i := \epsilon^{n - d_i} \bar{f}$ on each $\Omega_i$ with $d_i < n$.

\begin{figure}[htb]
    \centering
    \includegraphics[height=4.5cm]{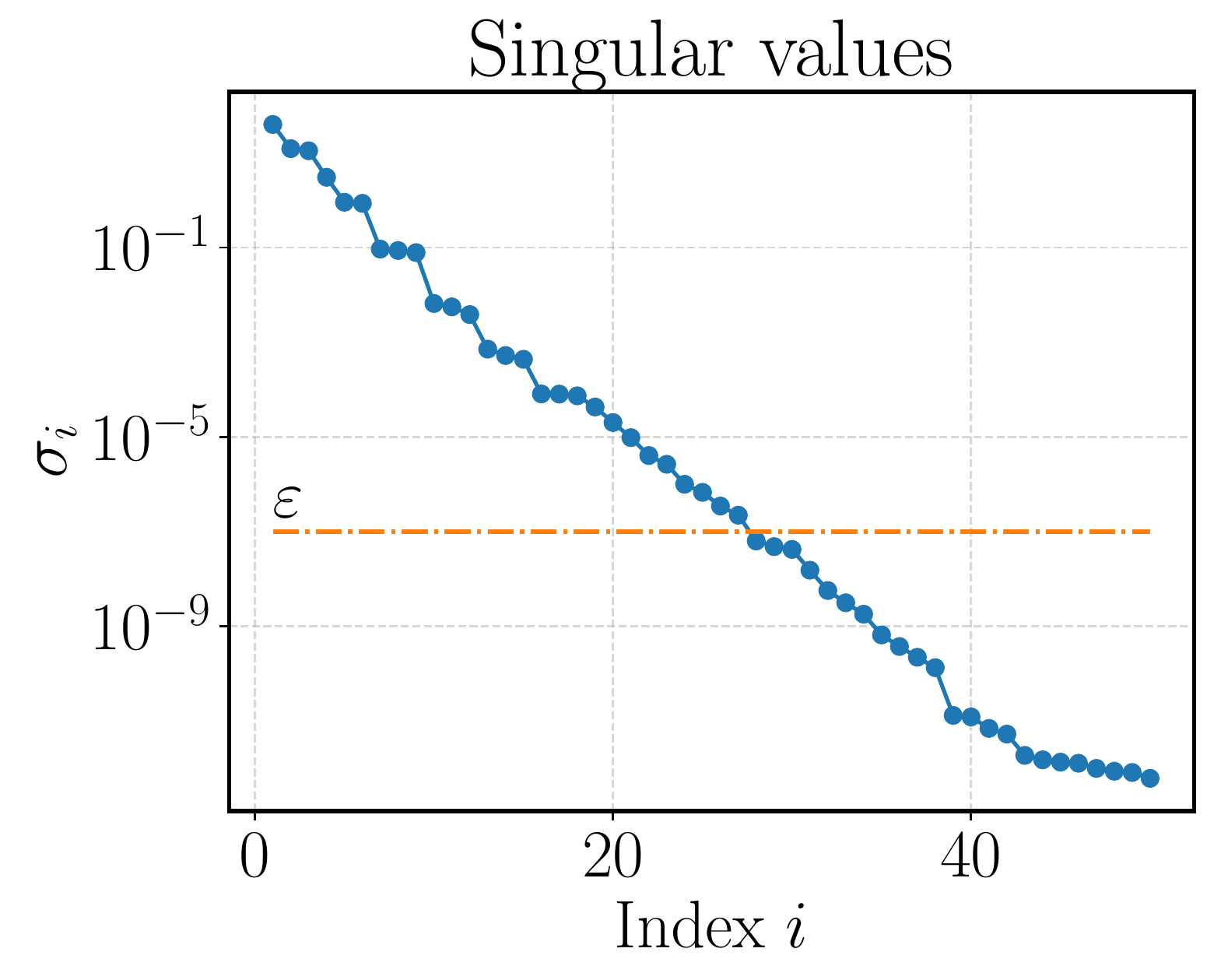}
    \
    \includegraphics[height=4.5cm]{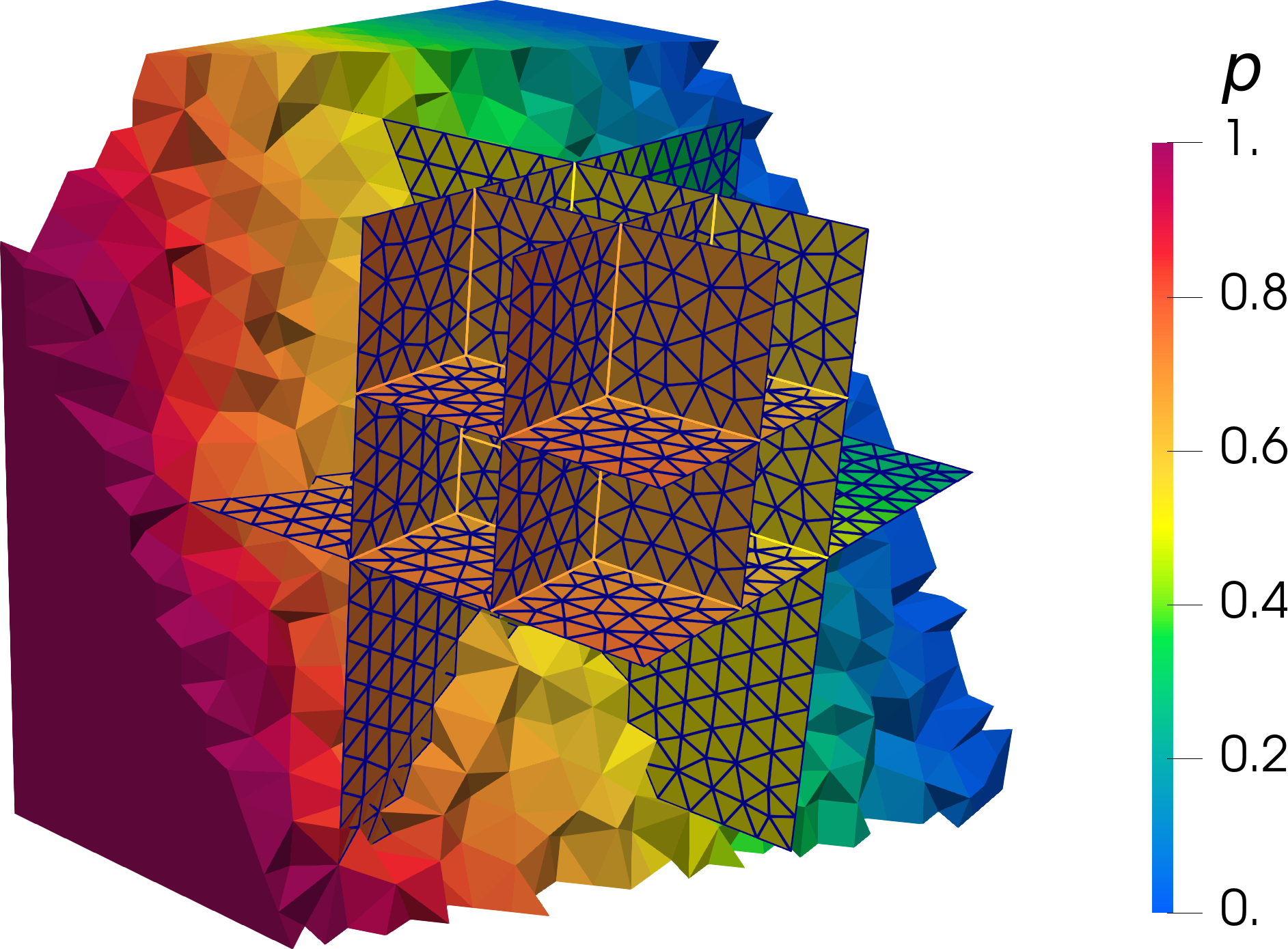}
    \caption{(left) The behavior of the solution to the four parameters of the third test case can be captured using 27 reduced basis functions. (right) The geometry consists of an immersed, regular fracture network in 3D. In the reference case, the flow is induced by a linear distribution on the boundaries and a source term in the network.}
    \label{fig: case 3}
\end{figure}

The reference solution for this case corresponds to the parameters $(\bar{K}, \bar{f}) = (10^4, 1)$ and $\bar{\bm{\alpha}} = (1, 0, 0)$. As shown in Figure~\ref{fig: case 3}(left), the reduced basis consists of 27 modes, with which we obtain an accurate description of the reference solution. More precisely Table~\ref{tab: summary numerics} shows the accuracy to be on the order of $10^{-8}$ and $10^{-10}$ for the flux and pressure, respectively.

This test case is the most computationally demanding of the three, with over 120k degrees of freedom in the original formulation. Following Section~\ref{sec: implementation}, the $LU$-decomposition of the cell-centered TPFA problem is saved in the \emph{off-line} stage and we emphasize that the RBM reduces the second step from 53,007 to 27 degrees of freedom. In this way, the computational cost for solving the reference problem decreases from approximately 6.6 minutes using a direct solver to 0.5 seconds in our implementation (speed-up factor $\approx800$).

\section{Concluding Remarks}
\label{sec: Conclusions}

We have proposed a three-step solution procedure for Darcy flow systems based on the exact de Rham complex. The mass conservation equation is first solved and we subsequently correct the flux field by adding a solenoidal vector field. We have shown how reduced basis methods can be used to relieve the computational cost in constructing the correction. Furthermore, the procedure was extended to the setting of Darcy flow in fractured porous media by employing mixed-dimensional differential operators.

The proposed procedure can be viewed from three perspectives. First, in the abstract setting (Section~\ref{sub: general}), it constructs the solution to the mixed formulation of a Laplace problem by solving three, related Hodge Laplace problems in primal form. Second, in the context of discretization methods (Section~\ref{sub: Discretization}), the procedure utilizes the efficiency of the TPFA finite volume method and applies a suitable correction to obtain the mixed finite (or virtual) element solution to the original problem. Third, from the algebraic perspective (Section~\ref{sec: implementation}), we can view the proposed procedure as approximating the Schur-complement of the original problem and applying a suitable correction.

Topics for future research will further explore different ways to relieve the computational effort in the second step, exploiting the fact that this does not influence the mass balance. Thus, the performance of approximate solvers such as Krylov subspace methods, multi-grid solvers or techniques based on deep learning will be investigated to approximate the vector potential.

\bibliographystyle{spmpsci}
\bibliography{references}

\section*{Declarations}

This project has received funding from the European Union's Horizon 2020 research and innovation programme under the Marie Skłodowska-Curie grant agreement No. 101031434 -- MiDiROM.
The authors have no relevant financial or non-financial interests to disclose.
The datasets and source code generated and analyzed during the current study are available in the repository \url{https://github.com/compgeo-mox}.

\end{document}